\newtheorem{theorem}{Theorem}[section]
\newtheorem{cor}{Corollary}[theorem]
\newtheorem{definition}{Definition}[section]
\newtheorem{lemma}{Lemma}[section]
\newtheorem{remark}{Remark}[section]
\journal{Computer Aided Geometric Design}
\begin{document}

\makeatletter
\newcommand*{\rom}[1]{\expandafter\@slowromancap\romannumeral #1@}
\makeatother

\begin{frontmatter}


\title{Isotopic Equivalence from B\'ezier Curve Subdivision}
\author[JL]{J. Li}
\address[JL]{Department of Mathematics,
       University of Connecticut, Storrs. }
       
 \author[TJP]{T. J. Peters \footnote{This author was partially supported by NSF grants CCF 0429477, CMMI 1053077 and CNS 0923158.  All statements here are the responsibility of the author, not of the National Science Foundation.}}
 \address[TJP]{Department of Computer Science and Engineering,
        University of Connecticut, Storrs.}
        
 \author[JAR]{J. A. Roulier}
 \address[JAR]{Department of Computer Science and Engineering,
        Univerisity of Connecticut, Storrs.}

\date{\today}


\begin{abstract}
We prove that the control polygon of a B\'ezier curve $\mathcal{B}$ becomes homeomorphic and ambient isotopic to $\mathcal{B}$ via subdivision, and we provide closed-form formulas to compute the number of subdivision iterations to ensure these topological characteristics. We first show that the {\em exterior angles} of control polygons converge exponentially to zero under subdivision. 
\end{abstract}

\begin{keyword}
B\'ezier curve \sep subdivision \sep piecewise linear approximation \sep non-self-intersection \sep homeomorphism \sep ambient isotopy.

\vspace{1ex}

\MSC 57Q37 \sep 57M50 \sep 57Q55 \sep 68R10
\end{keyword}

\end{frontmatter}

\section{Introduction}
\label{sec:intro}

Preserving certain topological characteristics such as homeomorphism and ambient isotopy, between an initial geometric model and its approximation, is of contemporary interest in geometric modeling \cite{Amenta2003, L.-E.Andersson2000, Lance2009, Moore_Peters_Roulier2007}, with the focus here being on Bezier curves. 

A B\'ezier curve is characterized by an indexed set of  points, which form a PL approximation of the curve, called a control polygon (Definition~\ref{def:bez}). The de Casteljau algorithm \cite{G.Farin1990} is a subdivision algorithm associated to B\'ezier curves which recursively generates control polygons more closely approximating the curve under Hausdorff distance \cite{J.Munkres1999}. We focus on homeomorphism and ambient isotopy between a B\'ezier curve and the control polygon. The control polygon homeomorphic to a simple B\'ezier curve is also simple, so homeomorphism precludes undesired self-intersections, while the control polygon ambient isotopic to a B\'ezier curve has the same knot type as the B\'ezier curve. 

However, there may be substantial topological differences between B\'ezier curves and their control polygons. First of all, B\'ezier curves and their control polygons are not necessarily homeomorphic. There are examples in the literature showing simple B\'ezier curves with self-intersecting control polygons or self-intersecting B\'ezier curves with simple control polygons \cite{JL2012, M.Neagu_E.Calcoen_B.Lacolle2000, Piegl}. Secondly, B\'ezier curves and their control polygons are not necessarily ambient isotopic. There is an example showing an unknotted B\'ezier curve with a knotted control polygon \cite{Bisceglio, Moore2006}. Examples of a knotted B\'ezier curve with an unknotted control polygon have recently appeared \cite{JL2012, Carlo}. 

Computationally, it is known that the convergence in Hausdorff distance is exponential \cite{Lane_Riesenfeld1980, Nairn-Peters-Lutterkort1999}. We show that the angular convergence rate is also exponential, and this becomes a useful tool in determining classical topological equivalence (by homeomorphism) as well as for knot equivalence. Consequently the convergence for homeomorphic and isotopic equivalence is also exponential. Furthermore, we derive closed-form formulas to compute sufficient numbers of subdivision iterations to achieve homeomorphism and ambient isotopy respectively. These formulas rely upon the constructive geometric proofs presented here. 

\section{Related Work}
\label{sec:relw}
Exponential convergence in Hausdorff distance under B\'ezier curve subdivision has been studied in the literature \cite{Lane_Riesenfeld1980, Nairn-Peters-Lutterkort1999}. Morin and Goldman proved that the discrete derivatives of the control polygons converge exponentially to the derivatives of the B\'ezier curve, by showing that discrete differentiation commutes with subdivision \cite{Morin_Goldman2001}. Our angular convergence is based on these previous results. 

The established topological equivalence by homeomorphism was given \cite{M.Neagu_E.Calcoen_B.Lacolle2000} by invoking the hodograph\footnote{The derivative of a B\'ezier curve is also expressed as a B\'ezier curve, known as the {\em hodograph} \cite{G.Farin1990}.}, but did not provide the number of subdivision iterations. We provide a constructive geometric proof for specified numbers of subdivision iterations to first produce a control polygon homeomorphic and, later, ambient isotopic to a given B\'ezier curve. Topologically reliable approximation in terms of homeomorphism of composite B\'ezier curves was established \cite{ChoMaekawa1996}, which used algorithmic techniques that do not completely rely upon the de Casteljau algorithm, but techniques related to ``significant points". As we mentioned in the introduction, topological preservation can be used to prevent undesired self-intersections. The intersection of curves and surfaces is one of the fundamental problems in areas of geometric modeling \cite{patrikalakis2002shape}. For intersections between two B\'ezier curves, C. K. Yap gives a complete subdivision algorithm \cite{yap2006complete}. 

We construct a tubular neighborhood for a B\'ezier curve, with the boundary of the tubular neighborhood being a pipe surface. Pipe surfaces have been studied since the 19th century \cite{Monge}, but the presentation here follows a contemporary source~\cite{Maekawa_Patrikalakis_Sakkalis_Yu1998}. These authors perform a thorough analysis and description of the end conditions of open spline curves.  The junction points of a B\'ezier curve are merely a special case of that analysis. 

Ambient isotopy is a stronger notion of equivalence than homeomorphism. An earlier algorithm \cite{TJP2011} establishes an isotopic approximation over a broad class of parametric geometry, but does so at the expense of the {\it a priori} bounds provided here by restricting to subdivision on splines. Other recent papers \cite{Burr2012, LineYap2011} present algorithms to compute isotopic PL approximation for $2D$ algebraic curves.  Computational techniques for establishing isotopy and homotopy have been established regarding algorithms for point-cloud  by ``distance-like functions'' \cite{Chazal2005}. 

Ambient isotopy under subdivision was previously established \cite{Moore_Peters_Roulier2007} for $3D$ B\'ezier curves of low degree (less than 4), where a crucial unknotting condition was trivially established for these low degrees. The results presented here extend to B\'ezier curves of arbitrary degree, by a more refined analysis of avoiding knots locally within the PL approximation generated. The focus on higher degree versions was motivated by applications in molecular simulation where B\'ezier curve models are created on input of  hundreds of thousands of points, with interest in having curves that are at least $C^1$.  Preserving that continuity over low degree models on this magnitude of points would be extremely tedious.

Denne and Sullivan proved that for homeomorphic curves, if their distance and angles between first derivatives are within some given bounds, then these curves are ambient isotopic \cite{DenneSullivan2008}. We use this result to derive ambient isotopy for B\'ezier curves and provide formulas to compute the number of subdivision iterations, which is computationally crucial, as B\'ezier curves are used in many practical areas. 

\section{Definitions and Notation}
\label{sec:def-no-thm}

Mathematical definitions, notation and a fundamental supportive theorem are presented in this section.  More specialized definitions will follow in appropriate sections.  The standard Euclidean norm will be denoted by $|| \hspace{2ex} ||$.
\begin{definition}
\label{def:bez}
The parameterized \textbf{B\'ezier curve}, denoted as $\mathcal{B}(t)$, of degree $n$ with control points $p_j \in\mathbb{R}^3$ is defined by
$$\mathcal{B}(t) = \sum_{j=0}^{n}{B_{j,n}(t)p_j}, t\in[0,1],
$$
where $B_{j,n}(t)=\left(\!\!\!
  \begin{array}{c}
	n \\
	j
  \end{array}
  \!\!\!\right)t^j(1-t)^{n-j}$ and the PL curve given by the points $\{p_0,p_1,\ldots,p_n\}$ is called its \textbf{control polygon}. When $p_0=p_n$, the control polygon is closed. Otherwise when $p_0 \neq p_n$, it is open. 
\end{definition}

In order to avoid technical considerations and to simplify the exposition, the class of B\'ezier curves considered will be restricted to those where the first derivative never vanishes.
\begin{definition}
\label{def:regularity}
A differentiable curve is said to be \textbf{regular} if its first derivative never vanishes.
\end{definition}

\begin{definition}
\label{def:simple}
A curve is said to be \textbf{simple} if it is non-self-intersecting.
\end{definition}

The B\'ezier curve of Definition~\ref{def:bez} is typically called a \textit{single segment B\'ezier curve}, while a \textit{composite B\'ezier curve} is created by joining two or more single segment B\'ezier curves at their common end points. 

\vspace{1ex}
\textbf{We use $\mathcal{B}$ to denote a simple, regular, $C^1$, compact, composite B\'ezier curve in $\mathbb{R}^3$, throughout the paper.}
\vspace{1ex}

 \begin{definition} \textup{\cite{J.Munkres1999} }
Let $X$ and $Y$ be two non-empty subsets of a metric space $(M, d)$. The \textbf{Hausdorff distance} $\mu(X, Y)$ is defined by
 $$\mu(X,Y):= \max\{\sup_{x\in X} \inf_{y\in Y} d(x,y), \sup_{y\in Y} \inf_{x\in X} d(x,y) \}.$$
 \end{definition}

Subdivision algorithms are fundamental for B\'ezier curves \cite{G.Farin1990} and a brief overview is given here.
Figure~\ref{fig:sub-ex} shows the first step of the de Casteljau algorithm with an input value of $\frac{1}{2}$ on a single B\'ezier curve.  For ease of exposition, the de Casteljau algorithm with this value of $\frac{1}{2}$ is assumed, but other fractional values can be used with appropriate minor modifications to the analyses presented. The initial control polygon $P$ is used as input to generate local PL approximations,  $P^1$ and $P^2$, as Figure~\ref{fig:dec1} shows.  Their union, $P^1 \cup P^2$, is then a new PL curve whose Hausdorff distance is closer to the curve than that of $P$.  

\begin{figure}[h!]
\centering

        \subfigure[Subdivision process]
   {   \includegraphics[height=2.7cm]{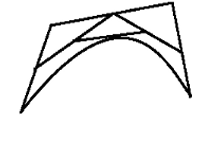}  \label{fig:dec2} }
           \subfigure[Initial and resultant curves]
   {   \includegraphics[height=2.7cm]{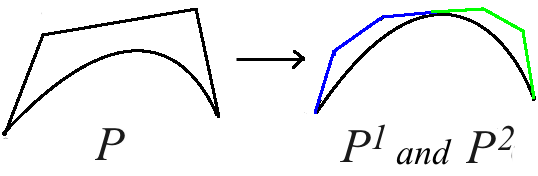}  \label{fig:dec1} }
\caption{A subdivision with parameter $\frac{1}{2}$}\label{fig:sub-ex}
 \end{figure}

A summary is that subdivision proceeds by selecting the midpoint of each segment of $P$ and  these midpoints are connected to create new segments, as Figure~\ref{fig:dec2} shows.
Recursive creation and connection of midpoints continues until a final midpoint is selected.  The union of the segments from the last step then forms a new PL curve. Termination is guaranteed since $P$ has only finitely many segments.

After $i$ iterations, the subdivision process generates $2^i$ PL sub-curves, each being a control polygon for part of the original curve \cite{G.Farin1990}, which will be referred to as a \textbf{sub-control polygon}\footnote{Note that by the subdivision process, each sub-control polygon of a simple B\'ezier curve is open.}, denoted by $P^k$ for $k = 1, 2, 3, \ldots, 2^i$.   Each $P^k$ has $n$ points and their union $\bigcup_k P^k$ forms a new PL curve that converges in Hausdorff distance to approximate the original B\'ezier curve.  The B\'ezier curve defined by $\bigcup_k P^k$ is exactly the same B\'ezier curve defined by the original control points $\{p_0,p_1,\ldots,p_n\}$ \cite{Lane_Riesenfeld1980}. So $\bigcup_k P^k$ is a new control polygon of the B\'ezier curve. 

Exterior angles were defined \cite{Milnor1950} in the context of closed PL curves, but are adapted here for both closed and open PL curves.  Exterior angles unify the concept of total curvature for curves that are PL or  differentiable.  


\begin{definition} \textup{\cite{Milnor1950}}
\label{def:exterior_angles}
The \textbf{exterior angle} between two oriented line segments, denoted as $\overrightarrow{p_{m-1}p_m}$ and $\overrightarrow{p_mp_{m+1}}$, is the angle formed by $\overrightarrow{p_mp_{m+1}}$ and the extension of $\overrightarrow{p_{m-1}p_m}$. Let the measure of the exterior angle to be $\alpha_m$ satisfying:
\begin{center}
$0 \leq \alpha_m \leq \pi$.
\end{center}

\end{definition}

\begin{definition}
Parametrize a curve $\gamma(s)$ with arc length $s$ on $[0,\ell]$. Then its \textbf{total curvature} is $\int_0^{\ell} || \gamma''(s)|| \ ds.$
\end{definition}

Total curvature can be defined for both $C^2$ and PL curves.  In both cases, the total curvature is denoted by $T_{\kappa}(\cdot)$. The unified terminology is invoked in Fenchel's theorem, which is fundamental to the work presented here.

\begin{definition}\textup{\cite{Milnor1950}}
\label{def:dcurva}
The \textbf{total curvature} of a PL curve in $\mathbb{R}^3$ is the sum of the measures of the exterior angles.
\end{definition}

Fenchel's Theorem \cite{DoCarmo1976} presented below is applicable both to PL curves and to differentiable curves.

\begin{theorem} \textup{\cite[Fenchel's Theorem]{DoCarmo1976}} \label{thm:gen-fenchel}
The total curvature of any closed curve is at least $2 \pi$, with equality holding if and only if the curve is convex.
\end{theorem}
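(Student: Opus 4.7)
The plan is to follow Fenchel's classical argument via the tangent indicatrix (or tantrix) on the unit sphere $S^2 \subset \mathbb{R}^3$. For a smooth closed curve $\gamma$ parametrized by arclength on $[0,\ell]$, define $T(s)=\gamma'(s)$; this traces a curve on $S^2$ whose length is exactly $\int_0^\ell \|\gamma''(s)\|\,ds = T_\kappa(\gamma)$. For a closed PL curve, the tantrix is a finite sequence of points on $S^2$ joined by minor great-circle arcs whose lengths are precisely the exterior angles $\alpha_m$, so again $T_\kappa(\gamma)$ equals the tantrix length. This reduction is what lets the unified statement in Theorem~\ref{thm:gen-fenchel} be proved once.

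First I would establish a \emph{balancing lemma}: the tantrix of any closed curve cannot be contained in an open hemisphere of $S^2$. If it were contained in the hemisphere cut out by a unit vector $v$, then in the smooth case
\begin{equation*}
0 < \int_0^\ell \langle T(s),v\rangle\,ds = \left\langle \int_0^\ell \gamma'(s)\,ds,\; v\right\rangle = \langle \gamma(\ell)-\gamma(0),v\rangle = 0,
\end{equation*}
a contradiction; the PL case follows identically with the integral replaced by a sum over edges. Second, I would prove the spherical geometry lemma that a closed spherical curve of length strictly less than $2\pi$ is contained in an open hemisphere. The cleanest route is Horn's argument: bisect the curve into two arcs of equal length less than $\pi$, take the midpoint $M$ of the chord joining their endpoints on $S^2$, and show that the antipode of $M$ lies strictly on one side of the curve, yielding the desired hemisphere. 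Combining the two lemmas gives $T_\kappa(\gamma)\geq 2\pi$.

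The main obstacle is the equality case. When $T_\kappa(\gamma)=2\pi$, the tantrix has length exactly $2\pi$ and still fails to lie in an open hemisphere, so the inequality in Horn's lemma must be saturated; careful tracking of where equality occurs forces the tantrix to lie on a single great circle $C \subset S^2$. The plane $\Pi$ through the origin normal to $C$'s axis then contains $\gamma'(s)$ for every $s$, so translating shows $\gamma$ lies in a plane parallel to $\Pi$. Finally, a planar closed curve with total curvature exactly $2\pi$ must have its signed curvature of constant sign (smooth case), or equivalently all its exterior angles of the same sign summing to $2\pi$ (PL case), and either condition is equivalent to convexity. The converse direction (convex implies $T_\kappa = 2\pi$) is immediate from the planar Gauss–Bonnet / rotation index computation.
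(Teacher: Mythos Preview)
The paper does not prove Theorem~\ref{thm:gen-fenchel} at all: it is stated with a citation to \cite{DoCarmo1976} and then invoked as a black box (in the proof of Lemma~\ref{lem:plpi}). So there is no ``paper's own proof'' to compare against; your proposal is simply a self-contained argument for a result the authors import from the literature.

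That said, your sketch is the classical Fenchel argument (tantrix on $S^2$, balancing lemma, Horn's hemisphere lemma) and is essentially correct for both the smooth and PL settings the paper needs. Two small points are worth tightening if you flesh it out. First, in the PL case you should note that the balancing lemma only needs the \emph{vertices} $e_i$ of the spherical tantrix to lie in an open hemisphere to derive the contradiction $\sum_i |p_{i+1}-p_i|\langle e_i,v\rangle=0$, so the possible degeneracy of a connecting arc when some $\alpha_m=\pi$ is irrelevant to the inequality. Second, your treatment of the equality case is the genuinely delicate part and is currently only a plan: ``careful tracking of where equality occurs'' in Horn's argument does force the tantrix onto a great circle, but writing this out cleanly (especially uniformly for the PL and smooth cases) takes more than a sentence; once planarity is established, your argument that a closed curve on $S^1$ of length exactly $2\pi$ must be monotone, hence the original curve is convex, is fine.
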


Denote a PL curve with vertices $\{p_0,p_1,\ldots,p_n\}$ by $P$, and the uniform parametrization \cite{Morin_Goldman2001} of $P$ over $[0,1]$ by $l(P)_{[0,1]}$. That is:
$$
l(P)_{[0,1]}(\frac{j}{n})=p_j\ for\ j=0,1,\cdots,n
$$
and $l(P)_{[0,1]}$ interpolates linearly between vertices.

\begin{definition}\textbf{Discrete derivatives} \cite{Morin_Goldman2001} are first defined at the parameters $t_j=\frac{j}{n}$, where
$$l(P)_{[0,1]}(t_j)=p_j$$ for $j=0,1,\cdots,n-1$. Let
$$
p'_j=l'(P)_{[0,1]}(t_j)=\frac{p_{j+1}-p_j}{t_{j+1}-t_j}.
$$
Denote $P'=(p'_0,p'_1,\ldots,p'_{n-1}).$ Then define the discrete derivative for $l(P)_{[0,1]}$ as:
$$
l'(P)_{[0,1]}=l(P')_{[0,1]}.
$$
For simplicity of notation, we let $\mathcal{P}(t)=l(P)_{[0,1]}$ and $\mathcal{P}'(t)=l'(P)_{[0,1]}$. 
\end{definition}

\section{Angular Convergence under Subdivision} 
\label{sec:ac}
We use the notation established in Section~\ref{sec:def-no-thm}. Also, let $\mathcal{P}_0(t)$ denote the original control polygon before subdivision. Let $M$ be the maximum of the distance between two consecutive vertices of $\mathcal{P}'_0(t)$.  let $\mathcal{P}(t_{j-1})$ and $\mathcal{P}(t_j)$ be any consecutive vertices of a control polygon $\mathcal{P}$ obtained by subdivision. 


\begin{lemma}
\label{lem:derivative_diff}
For a $C^1$, composite B\'ezier curve $\mathcal{B}$, we have 
$$||\mathcal{P}'(t_j)-\mathcal{P}'(t_{j-1})|| \leq \frac{M}{2^i}.$$
\end{lemma}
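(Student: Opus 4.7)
The plan combines two ingredients: Morin and Goldman's commutativity \cite{Morin_Goldman2001} of discrete differentiation with de Casteljau subdivision, and an elementary observation that subdivision at $t=1/2$ does not increase the maximum edge length of a control polygon.

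First, I would invoke the Morin--Goldman result to express $\mathcal{P}'$ in terms of $\mathcal{P}'_0$. For any sub-control polygon $\mathcal{P}$ obtained after $i$ de Casteljau subdivisions of $\mathcal{P}_0$, one has $\mathcal{P}' = \frac{1}{2^i}\widetilde{Q}$, where $\widetilde{Q}$ is the sub-polygon of $\mathcal{P}'_0$ obtained by applying the same sequence of $i$ subdivisions to $\mathcal{P}'_0$. The factor $1/2^i$ arises because each subdivision at $t=1/2$ halves the underlying parameter interval, and reparametrizing each sub-curve back to $[0,1]$ introduces a Jacobian of $1/2$ in the derivative (one factor per iteration). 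A direct verification on the $i=1$ case confirms this scaling, and induction on $i$ completes it.

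Second, I would show that a single de Casteljau subdivision at $t=1/2$ yields sub-polygons whose edge lengths are bounded by the maximum edge length of the input polygon. If $\Delta^{(0)}_j = q_{j+1}-q_j$ are the consecutive edge vectors of a polygon $Q$ and $\Delta^{(r)}_j$ are the de Casteljau iterates of these edges, the recursion $q^{(r)}_j = \tfrac{1}{2}\bigl(q^{(r-1)}_j + q^{(r-1)}_{j+1}\bigr)$ yields $\Delta^{(r)}_j = \tfrac{1}{2}\bigl(\Delta^{(r-1)}_j + \Delta^{(r-1)}_{j+1}\bigr)$. By induction, every consecutive edge of either sub-polygon is a convex combination of the original $\Delta^{(0)}_k$'s, so its norm is bounded by $\max_k ||\Delta^{(0)}_k||$. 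Iterating, any sub-polygon of $\mathcal{P}'_0$ obtained after $i$ subdivisions has all edge lengths bounded by $M$.

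Combining the two steps gives
\[
  ||\mathcal{P}'(t_j) - \mathcal{P}'(t_{j-1})|| \;=\; \frac{1}{2^i}\, ||\widetilde{Q}_j - \widetilde{Q}_{j-1}|| \;\leq\; \frac{M}{2^i}.
\]
For a composite $\mathcal{B}$, the same argument is run on each single-segment piece and $M$ is taken as the overall maximum edge length across all segments of $\mathcal{P}'_0$. The main obstacle is the careful bookkeeping for the $1/2^i$ scaling in the Morin--Goldman step; once that is pinned down, the edge-length bound reduces to the elementary convex-combination observation above.
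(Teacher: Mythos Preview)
Your approach is essentially the paper's: both invoke Morin--Goldman's commutation of discrete differentiation with subdivision to identify $\mathcal{P}'$ with (a scaled copy of) the $i$-fold subdivision of $\mathcal{P}'_0$, and then bound the edge lengths of that subdivided polygon. The only difference is bookkeeping: the paper places the entire $2^{-i}$ factor in the edge-contraction step by citing Lane--Riesenfeld's Lemma~2.5 directly, while you split off $2^{-i}$ as a reparametrization Jacobian and supply a self-contained convex-combination argument that subdivision does not increase the maximal edge length---a weaker statement than Lane--Riesenfeld, but sufficient here.
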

\begin{proof}
Morin and Goldman \cite[Lemma 4]{Morin_Goldman2001} proved that the discrete differentiation commutes with subdivision, so $\mathcal{P}'$ can be viewed as being obtained by subdividing $\mathcal{P}'_0$. But $\mathcal{P}'_0$ is a control polygon of $\mathcal{B}'$ \mbox{\cite[Lemma 6]{Morin_Goldman2001}}. Another previous result \cite[Lemma 2.5]{Lane_Riesenfeld1980} showed that the distance between any two consecutive vertices of a control polygon is bounded by $\frac{M}{2^i}$. \hfill $\boxempty$
\end{proof}

\begin{theorem}[Angular Convergence]\label{thm:curvature_conv}
For a $C^1$, composite B\'ezier curve $\mathcal{B}$, the exterior angles of the PL curves generated by subdivision converge uniformly to $0$ at a rate of $O(\sqrt{\frac{1}{2^i}})$.
\end{theorem}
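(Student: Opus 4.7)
The exterior angle at vertex $p_m$ of a subdivided control polygon is, by Definition~\ref{def:exterior_angles}, the angle between $\overrightarrow{p_{m-1}p_m}$ and $\overrightarrow{p_m p_{m+1}}$. Since the discrete derivative $p'_j=(p_{j+1}-p_j)/(t_{j+1}-t_j)$ is a positive scalar multiple of the edge vector $\overrightarrow{p_j p_{j+1}}$, this exterior angle $\alpha_m$ is precisely the angle between the two consecutive discrete-derivative vectors $p'_{m-1}$ and $p'_m$. My plan is therefore to translate the derivative-difference estimate of Lemma~\ref{lem:derivative_diff} into a bound on $\alpha_m$ via the law of cosines.

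Expanding $\|p'_m-p'_{m-1}\|^2=(\|p'_m\|-\|p'_{m-1}\|)^2+4\|p'_m\|\|p'_{m-1}\|\sin^2(\alpha_m/2)$ and discarding the first nonnegative term gives
$$\sin(\alpha_m/2)\;\leq\;\frac{\|p'_m-p'_{m-1}\|}{2\sqrt{\|p'_m\|\|p'_{m-1}\|}}\;\leq\;\frac{M}{2^{i+1}\sqrt{\|p'_m\|\|p'_{m-1}\|}}.$$
To turn this into a uniform angular bound I would next establish a positive lower bound on $\|p'_j\|$, uniform in $j$ and valid for all subdivision depths past an explicit threshold $i_0$. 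This follows from (i) the regularity and compactness of $\mathcal{B}$, which yield $m_0:=\min_t\|\mathcal{B}'(t)\|>0$; (ii) the Morin--Goldman results \cite{Morin_Goldman2001} that $\mathcal{P}'_0$ is a control polygon of the hodograph $\mathcal{B}'$ and that discrete differentiation commutes with subdivision; and (iii) the Lane--Riesenfeld exponential Hausdorff convergence \cite{Lane_Riesenfeld1980} applied to $\mathcal{B}'$, which forces the vertices of $\mathcal{P}'$ to satisfy $\|p'_j\|\geq m_0/2$ for $i\geq i_0$.

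Inserting this lower bound gives $\sin(\alpha_m/2)\leq M/(m_0\cdot 2^{i})$, and then $\alpha_m\leq 2\arcsin(M/(m_0\cdot 2^{i}))\leq \pi M/(m_0\cdot 2^{i})$. This is actually $O(2^{-i})$, which is stronger than, and in particular implies, the claimed $O(\sqrt{1/2^i})$ rate. Uniformity in $m$ is automatic because the lower bound on $\|p'_j\|$ is independent of which edge of which sub-control polygon is considered.

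The main obstacle is the uniform lower bound on $\|p'_j\|$: without it, two nearly equal short vectors can still differ by a large angle, so the trigonometric step breaks down. Pinning this bound down cleanly requires carefully composing the commutation of subdivision with discrete differentiation, the exponential Hausdorff convergence applied to the hodograph, and the regularity hypothesis on $\mathcal{B}$. Once this ingredient is in place, the rest of the proof is the short trigonometric computation above.
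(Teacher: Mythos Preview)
Your argument follows essentially the same structure as the paper's: both proofs (i) invoke Lemma~\ref{lem:derivative_diff} to bound the difference of consecutive discrete derivatives by $M/2^i$, (ii) use regularity of $\mathcal{B}$ together with the convergence of $\mathcal{P}'$ to $\mathcal{B}'$ to extract a uniform positive lower bound on the discrete-derivative norms, and (iii) convert these two ingredients into an angle bound by a trigonometric identity. The only real difference is in step (iii): the paper bounds $1-\cos\alpha$ and then inverts the quadratic Taylor approximation $1-\cos\alpha\approx\alpha^2/2$, which produces the square root and hence the stated $O(\sqrt{1/2^i})$ rate; you instead use the half-angle identity $\|u-v\|^2=(\|u\|-\|v\|)^2+4\|u\|\|v\|\sin^2(\alpha/2)$, which is linear in $\alpha$ and therefore yields the sharper $O(1/2^i)$ rate directly. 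Your route is cleaner and strictly improves the conclusion; the paper's weaker rate is an artifact of the $1-\cos$ inversion rather than a genuine limitation of the method.
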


\begin{proof}
Since $\mathcal{B}(t)$ is assumed to be regular and $C^1$, the non-zero minimum of $||\mathcal{B}'(t)||$ over the compact set [0,1] is obtained. For brevity, the notations of $u_i  = \mathcal{P}'(t_j)$, $v_i = \mathcal{P}'(t_{j-1})$ and $\alpha=\alpha_m$ are introduced. The convergence of $u_i$ to $\mathcal{B}'(t_j)$ \cite{Morin_Goldman2001} implies that $||u_i||$ has a positive lower bound for $i$ sufficiently large, denoted by $\lambda$.

Lemma~\ref{lem:derivative_diff} gives that
$||u_i-v_i|| \rightarrow 0$ as $i \rightarrow \infty$ at a rate of  $O(\frac{1}{2^i})$. This implies:
$||u_i||-||v_i|| \rightarrow 0$ as $i \rightarrow \infty$ at a rate of  $O(\frac{1}{2^i})$.

Consider 
$$1-\cos(\alpha)=1-\frac{u_iv_i}{||u_i||||v_i||}$$
$$=\frac{||u_i||||v_i||-v_iv_i+v_iv_i-u_iv_i}{||u_i||||v_i||}$$
\begin{align}\label{eq:cos-1} \leq \frac{||u_i||-||v_i||}{||u_i||} + \frac{||v_i-u_i||}{||u_i||} \leq \frac{||u_i||-||v_i||}{\lambda} + \frac{||v_i-u_i||}{\lambda} \leq \frac{2||v_i-u_i||}{\lambda}\end{align}
It follows from Lemma~\ref{lem:derivative_diff} that
\begin{align}\label{eq:ml2} 1-\cos(\alpha) \leq \frac{M}{\lambda 2^{i-1}}.\end{align}

It follows from the continuity of $arc\cos$ that $\alpha$ converges to $0$ as $i \rightarrow \infty$. To obtain the convergence rate, taking the power series expansion of $\cos$ we get
\begin{alignat}{2}
1-\cos(\alpha) & \geq \alpha^2  (\frac{1}{2}-|\frac{\alpha^2}{4!}-\frac{\alpha^4}{6!}+\cdots|) \nonumber \\
&=\alpha^2  (\frac{1}{2}-\alpha^2  |\frac{1}{4!}-\frac{\alpha^2}{6!}+\cdots|)
\label{eq:cos}
\end{alignat}
Note that for $1 > \alpha$, 
\begin{align}\label{eq:e} e = 1+1+\frac{1}{2!}+\frac{1}{3!}+\frac{1}{4!}+\cdots > |\frac{1}{4!}-\frac{\alpha^2}{6!}+\cdots|.\end{align}

Combining Inequality~\ref{eq:cos} and \ref{eq:e} we have, $$1-\cos(\alpha) > \alpha^2  (\frac{1}{2}-\alpha^2  e).$$
For any $0< \tau <\frac{1}{2}$, sufficiently many subdivisions will guarantee that $\alpha$ is small enough such that
$1 > \alpha$ and $\tau > \alpha^2  e$. Thus 
$$1-\cos(\alpha)> \alpha^2(\frac{1}{2}-\alpha^2  e)> \alpha^2  (\frac{1}{2} - \tau)>0.$$
By Inequality~\ref{eq:ml2} we have
$$\alpha < \sqrt{\frac{2M}{\lambda (\frac{1}{2}-\tau)}} \sqrt{\frac{1}{2^i}}.$$
So $\alpha$ converges to $0$ at a rate of $O(\sqrt{\frac{1}{2^i}})$. \hfill $\boxempty$
\end{proof}

\section{Topologically reliable control polygons}
\label{sec:avoid-int}

We present sufficient conditions for a homeomorphism between a subdivided control polygon and its associated B\'ezier curve, and derive an ambient isotopy by relying on related results \cite{DenneSullivan2008}.

\subsection{Homeomorphism}

To obtain a homeomorphism, we first establish a local homeomorphism between a sub-control polygon and the corresponding sub-curve of $\mathcal{B}$, and then establish a global homeomorphism between the control polygon and $\mathcal{B}$. 

\begin{lemma} \textup{\cite[Lemma 7.4]{JL-isoconvthm}}
\label{lem:non-int}
Let $P$ be an open PL curve in $\mathbb{R}^3$.  If $T_{\kappa}(P)=\sum_{j=1}^{n-1}{\alpha_j}< \pi$, then $P$ is simple.
\end{lemma}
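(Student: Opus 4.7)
The plan is to argue by contradiction, using Fenchel's Theorem (Theorem~\ref{thm:gen-fenchel}) applied to a closed PL loop formed from a sub-arc of $P$ together with the alleged self-intersection. So suppose, toward a contradiction, that $P = p_0 p_1 \cdots p_n$ is not simple. Then there exist two non-adjacent edges $[p_i, p_{i+1}]$ and $[p_j, p_{j+1}]$ with $j \geq i+2$ meeting at a common point $q$. I would first handle the generic case in which $q$ lies in the relative interior of both edges, and then remark that the vertex cases are strictly easier.

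Next, I would construct the closed PL curve $C$ with consecutive vertices $q, p_{i+1}, p_{i+2}, \ldots, p_j, q$. The key geometric observation is that the segment from $q$ to $p_{i+1}$ lies along $[p_i, p_{i+1}]$ and the segment from $p_j$ to $q$ lies along $[p_j, p_{j+1}]$; hence the exterior angle of $C$ at each $p_k$, for $k = i+1, \ldots, j$, coincides exactly with the exterior angle $\alpha_k$ of $P$ at that vertex, because the incoming and outgoing directions agree. The only additional exterior angle in $C$ is the one at $q$, call it $\alpha_q$, which satisfies $\alpha_q \le \pi$ by Definition~\ref{def:exterior_angles}. Therefore
\begin{equation*}
T_{\kappa}(C) \;=\; \alpha_q + \sum_{k=i+1}^{j} \alpha_k \;\le\; \pi + T_{\kappa}(P) \;<\; \pi + \pi \;=\; 2\pi,
\end{equation*}
where the strict inequality uses the hypothesis $T_{\kappa}(P) < \pi$. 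This contradicts Fenchel's Theorem, which forces $T_{\kappa}(C) \ge 2\pi$.

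The main obstacle is verifying that the exterior angles at $p_{i+1}$ and $p_j$ in the closed loop $C$ really do coincide with the original exterior angles $\alpha_{i+1}$ and $\alpha_j$ of $P$; this is where the collinearity of $q$ with the original edge directions is essential and needs to be stated carefully. The remaining degenerate cases, where the self-intersection occurs at a vertex of one or both edges, give a closed loop with one fewer or with no extra vertex added; the same bookkeeping yields total curvature strictly less than $2\pi$, and the Fenchel contradiction goes through verbatim. Finally, the low-vertex cases $n \le 2$ can be dismissed in a single sentence, since a self-intersection of two incident edges would force an exterior angle equal to $\pi$ at the shared vertex, contradicting $T_{\kappa}(P) < \pi$.
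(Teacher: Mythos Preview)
Your argument is correct and is the standard Fenchel-based contradiction. Note that the paper does not actually supply a proof for Lemma~\ref{lem:non-int}; it simply cites it from an external source. However, your approach is exactly the technique the paper itself employs a few lines later in the proof of Lemma~\ref{lem:plpi}: close up a sub-arc into a polygon, invoke Theorem~\ref{thm:gen-fenchel} to get total curvature at least $2\pi$, subtract off the exterior angle at the added vertex (at most $\pi$), and reach a contradiction with the curvature hypothesis on the original arc. So your proposal is not only sound but aligned with the method the authors clearly have in mind.

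One small remark: your bookkeeping for the angles at $p_{i+1}$ and $p_j$ is fine, and the degenerate cases you flag (intersection at a vertex, or adjacent edges overlapping forcing an exterior angle equal to $\pi$) are handled correctly. You might also want a one-line observation that the closed loop $C$ is genuinely a closed polygon with at least three vertices so that Fenchel applies; in the borderline case $j = i+2$ with $q$ interior to both edges, $C$ is the triangle $q\, p_{i+1}\, p_{i+2}$, which is still covered.
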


\begin{theorem}
\label{thm:non-self-int-sub}
For a $C^1$, composite B\'ezier curve $\mathcal{B}$, there exists a sufficiently large value of $i$, such that after $i$-many subdivisions, each of the sub-control polygons generated as $P^k$ for
$k = 1, 2, 3, \ldots, 2^i$ will be simple.
\end{theorem}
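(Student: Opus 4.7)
The plan is to combine the angular convergence theorem with the simplicity criterion provided by Lemma~\ref{lem:non-int}. Each sub-control polygon $P^k$ has a bounded number of vertices---namely $n+1$ when it approximates a segment of degree $n$---and hence contributes at most $n-1$ exterior angles to its total curvature. If I can force each of these exterior angles to be uniformly small, then the total curvature of $P^k$ will drop below $\pi$, and simplicity follows immediately from Lemma~\ref{lem:non-int}.

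More precisely, I would first let $N$ denote the maximum degree among the segments of the composite curve $\mathcal{B}$, so that every $P^k$ has at most $N-1$ interior vertices regardless of which original segment it belongs to. Next I would apply Theorem~\ref{thm:curvature_conv} to obtain a uniform bound $\alpha_m \leq C\sqrt{1/2^i}$ on every exterior angle of every subdivided polygon, where $C$ depends only on $\mathcal{B}$ (through the constants $\lambda$ and $M$ appearing in the proof of that theorem). Summing over the interior vertices of $P^k$ yields
$$T_\kappa(P^k) \;=\; \sum_m \alpha_m \;\leq\; (N-1)\,C\,\sqrt{\frac{1}{2^i}}.$$
Choosing $i$ large enough that $(N-1)\,C\,\sqrt{1/2^i} < \pi$, equivalently $i > 2\log_2\!\bigl((N-1)C/\pi\bigr)$, Lemma~\ref{lem:non-int} then forces each $P^k$ to be simple.

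The main obstacle I anticipate is justifying that the uniform angular bound from Theorem~\ref{thm:curvature_conv} holds simultaneously across all $2^i$ sub-control polygons and across all segments of the composite curve. Both constants $M$ (the maximum edge length in the hodograph control polygon) and $\lambda$ (the positive infimum of $\|\mathcal{B}'(t)\|$) can be treated as global invariants of $\mathcal{B}$ by taking the supremum of $M$ and the infimum of $\lambda$ across the finitely many composite segments; compactness of $[0,1]$ together with the regularity and $C^1$ hypotheses guarantees $\lambda > 0$. Once this uniformity is established, the proof reduces to the clean two-step combination of bounding exterior angles and invoking the simplicity lemma, yielding not only existence of a sufficient $i$ but an explicit closed-form threshold.
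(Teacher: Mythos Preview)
Your proposal is correct and follows essentially the same route as the paper: bound each exterior angle uniformly via Theorem~\ref{thm:curvature_conv}, sum the at most $n-1$ exterior angles of each $P^k$ to force $T_\kappa(P^k)<\pi$, and conclude simplicity from Lemma~\ref{lem:non-int}. Your version is in fact slightly more careful than the paper's---you explicitly take $N$ to be the maximal degree across segments of the composite curve and extract an explicit threshold $i > 2\log_2\bigl((N-1)C/\pi\bigr)$, whereas the paper simply asserts the existence of a sufficiently large $i$---but the underlying argument is identical.
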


\begin{proof}
For each $P^k$, the measures of the exterior angles of $P^k$ converge uniformly to zero as $i$ increases (Theorem~\ref{thm:curvature_conv}).  Each open $P^k$ has $n$ edges.  Denote the $n-1$ exterior angles of each $P^k$ by $\alpha_j^k$, for $j = 1, \ldots, n-1$ and for $k = 1, 2, 3, \ldots, 2^i.$  Then  there exists $i$ sufficiently large such that
$$ \sum_{j = 1}^{n - 1} \alpha_j^k  < \pi,$$ for each $k = 1, 2, 3, \ldots, 2^i.$
Use of Lemma~\ref{lem:non-int} completes the proof.  \hfill $\boxempty$ \\
\end{proof}

The proof techniques for homeomorphism rely upon the sub-control polygons to be pairwise disjoint, except at their common end points. Denote two generated sub-control polygons of $\mathcal{B}$ as 
\begin{center}
$P=(p_0,p_1,\ldots,p_n)$ and $Q=(q_0,q_1,\ldots,q_n)$. 
\end{center}

\begin{definition}\label{def:adj}
The sub-control polygons $P$ and $Q$ are said to be \textbf{consecutive} if the last vertex $p_n$ of  $P$ is the first vertex $q_0$ of  $Q$, that is, $p_n=q_0$. 
\end{definition}

\begin{remark}
\label{re:adj-coll}
For $\mathcal{B}$, the $C^1$ assumption ensures that the segments $\overrightarrow{p_{n-1}p_n}$ and $\overrightarrow{q_0q_1}$ are collinear. The regularity assumption ensures that the exterior angle can not be $\pi$. So the exterior angle at the common point is $0$. 
\end{remark} 

Lemma~\ref{lem:plpi} extends to arbitrary degree B\'ezier curves from a previously established result that was restricted to cubic B\'ezier curves \cite{Stone_DeRose}, as used in the proof of isotopy under subdivision for low-degree B\'ezier curves \cite{Moore_Peters_Roulier2007}.

\begin{figure}[htb]
\centering
   \includegraphics[height=7.5cm]{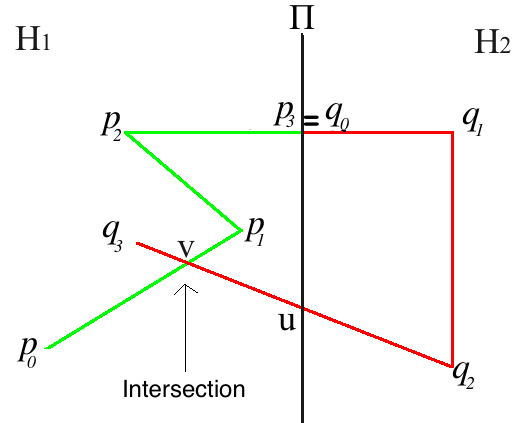}
 \vspace{-0.1in}
    \caption{Intersecting consecutive sub-control polygons}
 \label{fig:pwint}
\end{figure}

\begin{lemma}\label{lem:plpi}
Let $\Pi$ be the plane normal to a sub-control polygon at its initial vertex. If the total curvature of the sub-control polygon is less than $\frac{\pi}{2}$, then the initial vertex is the only single point where the plane intersects the sub-control polygon. 
\end{lemma}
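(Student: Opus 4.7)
The plan is to set up a "height function" along the initial segment direction and show it is strictly monotone along the sub-control polygon, so the only preimage of $0$ is the initial vertex itself. Let $v$ be the unit vector in the direction of $\overrightarrow{p_0 p_1}$, and define $h(x) = (x - p_0) \cdot v$. Then $\Pi = h^{-1}(0)$, so it suffices to show $h > 0$ on $P \setminus \{p_0\}$.

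For each segment $\overrightarrow{p_{j-1}p_j}$, let $v_j$ be its unit direction. The first step is to bound the angle $\theta_j$ between $v_1$ and $v_j$ by the accumulated exterior curvature. Each exterior angle $\alpha_m$ is by Definition~\ref{def:exterior_angles} exactly the (unsigned) angle between $v_m$ and $v_{m+1}$, i.e.\ the spherical distance on $S^2$. By the triangle inequality for the spherical metric,
\begin{equation*}
\theta_j \;\leq\; \sum_{m=1}^{j-1} \alpha_m \;\leq\; T_\kappa(P) \;<\; \tfrac{\pi}{2}.
\end{equation*}
Consequently $v_1 \cdot v_j = \cos\theta_j > 0$ for every $j = 1,\ldots,n$.

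Next I would translate this dot-product bound into monotonicity of $h$. For any consecutive pair of vertices, $h(p_j) - h(p_{j-1}) = \|p_j - p_{j-1}\|\,(v_1 \cdot v_j) > 0$, so the sequence $h(p_0)=0 < h(p_1) < \cdots < h(p_n)$ is strictly increasing. Because $P$ is piecewise linear and $h$ is linear along each segment, $h$ is strictly increasing along the entire parametrization of $P$ starting from $0$ at $p_0$. Hence $h(x) > 0$ for every $x \in P \setminus \{p_0\}$, and $\Pi \cap P = \{p_0\}$ as required.

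I do not expect any serious obstacle here; the one delicate point is justifying the inequality $\theta_j \le \sum_{m<j} \alpha_m$ cleanly, since the exterior angle is defined as an unsigned number in $[0,\pi]$ and the segments live in $\mathbb{R}^3$ (not a plane). The right framework is to interpret each $\alpha_m$ as a geodesic distance on the unit sphere $S^2$ and invoke the triangle inequality there, which avoids any issues with orientation or planarity. Everything else reduces to the observation that a sum of vectors each making an acute angle with $v_1$ has strictly positive component along $v_1$.
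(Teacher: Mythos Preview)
Your argument is correct, but it follows a different route from the paper's. The paper argues by contradiction via Fenchel's theorem: assuming a second intersection point $u\in\Pi\cap Q$, one closes up the polygon $q_0,\dots,u,q_0$ and invokes Theorem~\ref{thm:gen-fenchel} to get total curvature at least $2\pi$; since the exterior angle at $q_0$ equals $\pi/2$ (the closing edge $\overrightarrow{uq_0}$ lies in $\Pi$, while $\overrightarrow{q_0q_1}$ is orthogonal to it) and the exterior angle at $u$ is at most $\pi$, the remaining angles must contribute at least $\pi/2$, contradicting $T_\kappa(Q)<\pi/2$.

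Your height-function approach avoids Fenchel entirely, relying only on the triangle inequality for the intrinsic metric on $S^2$. That makes it more self-contained and yields the marginally stronger conclusion that $Q\setminus\{q_0\}$ lies strictly in the open half-space $\{h>0\}$. The paper's approach, on the other hand, plugs directly into machinery already set up in Section~\ref{sec:def-no-thm} and mirrors the style of the companion Lemma~\ref{lem:non-int}. One tacit assumption in your write-up is that each segment has positive length, so that the unit directions $v_j$ and hence the exterior angles are well defined; this is harmless here but deserves a word, since regularity of $\mathcal{B}$ does not by itself force all consecutive control points of a sub-control polygon to be distinct.
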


\begin{proof}
Denote the sub-control polygon as $Q=(q_0, q_1, \ldots, q_n)$, where Figure~\ref{fig:pwint} shows an orthogonal projection of this 3D geometry. Assume to the contrary that $\Pi \cap Q$ contains a point $u$ where $u \neq q_0$. Consider the closed polygon formed by vertices $\{q_0, \ldots, u, q_0\}$. Then by Theorem~\ref{thm:gen-fenchel} we know that the total curvature of the closed polygon is at least $2\pi$. However, excluding the exterior angles at $q_0$ (which is $\frac{\pi}{2}$), and the exterior angles at $u$ (which is at most $\pi$ by Definition~\ref{def:exterior_angles}), we still at least have $\frac{\pi}{2}$ left, which contradicts to $T_{\kappa}(Q) < \frac{\pi}{2}$.\hfill $\boxempty$
\end{proof} 

\begin{lemma}
\label{lem:pi}
Recall that $\mathcal{B}$ denotes a simple, regular, $C^1$, composite B\'ezier curve in $\mathbb{R}^3$. Let $w$ be a point of $\mathcal{B}$ where $\mathcal{B}$ is subdivided and let $\Pi$ be the plane normal to $\mathcal{B}$ at $w$. Then there exists a subdivision of $\mathcal{B}$ such that the sub-control polygon ending at $w$ and the sub-control polygon beginning at $w$ intersect $\Pi$ only at the single point $w$.  
\end{lemma}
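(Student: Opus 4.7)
The plan is to reduce the statement to two applications of Lemma~\ref{lem:plpi}, one to each sub-control polygon meeting at $w$, after first identifying $\Pi$ with a plane normal to the appropriate initial edges.

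Denote by $P=(p_0,\dots,p_n)$ the sub-control polygon ending at $w=p_n$ and by $Q=(q_0,\dots,q_n)$ the sub-control polygon beginning at $w=q_0$. A standard endpoint-tangent property of B\'ezier curves says that $\mathcal{B}'(w)$ is parallel to the last edge $\overrightarrow{p_{n-1}p_n}$ of $P$ and to the first edge $\overrightarrow{q_0 q_1}$ of $Q$; the $C^{1}$ hypothesis on $\mathcal{B}$ moreover forces these two edges to share a common direction, as already noted in Remark~\ref{re:adj-coll}. Consequently $\Pi$, being the plane through $w$ perpendicular to $\mathcal{B}'(w)$, is perpendicular to the initial edge of $Q$ at $q_0=w$ and, after reversing the orientation of $P$, also perpendicular to the initial edge of the reversed polygon $(p_n,p_{n-1},\dots,p_0)$ at $p_n=w$.

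Next I would invoke the Angular Convergence Theorem (Theorem~\ref{thm:curvature_conv}) to bound total curvatures. Since the exterior angles converge to $0$ uniformly over all sub-control polygons and each such polygon has only $n-1$ exterior angles, $i$ can be taken large enough to ensure that every sub-control polygon generated at the $i$-th subdivision level has total curvature strictly less than $\pi/2$. Total curvature is invariant under reversing a polygon, so the same bound applies to the reversal of $P$.

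Both hypotheses of Lemma~\ref{lem:plpi} are then satisfied for $Q$ and for the reversal of $P$, giving $\Pi\cap Q=\{w\}$ and $\Pi\cap P=\{w\}$, and hence $\Pi\cap(P\cup Q)=\{w\}$, as desired. The only slightly subtle point in the whole argument is the initial identification of $\Pi$ as the perpendicular plane to the two initial edges at $w$; once that is in hand, the rest is a double quotation of Lemma~\ref{lem:plpi} with the angular-convergence estimate controlling how many subdivisions are required.
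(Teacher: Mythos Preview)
Your proof is correct and follows essentially the same route as the paper's: bound the total curvature of each adjacent sub-control polygon below $\pi/2$ via Theorem~\ref{thm:curvature_conv}, identify $\Pi$ with the normal plane to the shared edge direction at $w$ (using the $C^1$ condition as in Remark~\ref{re:adj-coll}), and then invoke Lemma~\ref{lem:plpi} on each side. You are in fact slightly more careful than the paper in explicitly reversing $P$ so that $w$ becomes its \emph{initial} vertex before applying Lemma~\ref{lem:plpi}, a detail the paper's proof leaves implicit.
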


\begin{proof}
The plane $\Pi$ separates  $\mathbb{R}^3$ into two disjoint open half-spaces, denoted as $H_1$ and $H_2$, such that $\mathbb{R}^3= H_1 \hspace{1ex} \cup \hspace{1ex} \Pi \hspace{1ex} \cup \hspace{1ex} H_2$ and $H_1 \hspace{1ex} \cap \hspace{1ex} H_2 = \emptyset$. By Remark~\ref{re:adj-coll}, the exterior angle at $\{w\}$ is $0$. 

Perform sufficient many subdivisions so that the control polygon ending at $w$, denoted by $P$ , and the control polygon beginning at $w$, denoted by $Q$, each have total curvature less than $\frac{\pi}{2}$ by Theorem~\ref{thm:curvature_conv}. Therefore, by Lemma~\ref{lem:plpi} the only point where $P$ or $Q$ intersect $\Pi$ is at $w$. \hfill $\boxempty$\\
\end{proof} 

This global homeomorphism will be proven by reliance upon pipe surfaces, which are defined below. 

\begin{definition}\label{def:ps}
The \textbf{pipe surface} of radius $r$ of a parameterized curve \textbf{c}$(t)$, where $t \in [0,1]$ 
is given by
\[ \textbf{ p}(t,\theta) = \textbf{ c}(t) + r[cos(\theta) \textbf{ n}(t) + 
sin(\theta) \textbf{ b}(t) ], \]
where $\theta \in [0,2\pi]$ and $\textbf{ n}(t)$ and $\textbf{ b}(t)$ are,
respectively, the normal and bi-normal vectors at the point $\textbf{c}(t)$, 
as given by the Frenet-Serret trihedron.  The curve $\textbf{c}$ is called a spine curve.
\end{definition}

For $\mathcal{B}$ and $i$ subdivisions, with resulting sub-control polygons $P^k$ for $k=1, \ldots, 2^i$, let $S_\mathcal{B}(r)$ be a pipe surface of radius $r$ for $\mathcal{B}$  so that $S_r(\mathcal{B})$ is non-self-intersecting.  For each $k = 1, \ldots, 2^i$, denote 
\begin{itemize}
\item the parameter of the initial point of $P^k$ by $t_0^k$, and that of the terminal point by $t_n^k$
\item the normal disc of radius $r$ centered at $\mathcal{B}(t)$ as $D_r(t)$,
\item the union $\bigcup_{t\in[t_0^k,t_n^k]}D_r(t)$ by $\Gamma_k$, and designate it as a \textbf{pipe section}.
\end{itemize}

\begin{theorem}
\label{thm:non-self-intersection}
Sufficient subdivisions will yield a simple control polygon that is homeomorphic to $\mathcal{B}$.
\end{theorem}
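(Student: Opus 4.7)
The plan is to show that after sufficient subdivisions the control polygon $\bigcup_{k=1}^{2^i} P^k$ is simple; once that is established, both $\bigcup_k P^k$ and $\mathcal{B}$ are compact, connected, simple curves (arcs if $\mathcal{B}$ is open, loops if $\mathcal{B}$ is closed), so the classification of compact $1$-manifolds immediately produces a homeomorphism between them.

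To prove simplicity, I would rule out three possible sources of self-intersection by choosing $i$ large enough for each to fail. First, no single $P^k$ self-intersects, which is exactly Theorem~\ref{thm:non-self-int-sub}. Second, two consecutive sub-control polygons $P^k$ and $P^{k+1}$ meet only at their shared endpoint $w$: by Lemma~\ref{lem:pi} the normal plane $\Pi$ at $w$ intersects $P^k \cup P^{k+1}$ only at $w$, and by Remark~\ref{re:adj-coll} the exterior angle at $w$ is $0$, so $P^k$ lies in one closed half-space bounded by $\Pi$ and $P^{k+1}$ lies in the other, giving intersection only at $w$. Third, non-consecutive $P^k$ and $P^{k'}$ are separated using pipe sections: fix a radius $r$ small enough that $S_r(\mathcal{B})$ is non-self-intersecting, so the pipe sections $\Gamma_k$, indexed over pairwise disjoint parameter intervals, are disjoint whenever $|k-k'|\geq 2$; if every $P^k\subset \Gamma_k$, then $P^k\cap P^{k'}=\emptyset$ for non-consecutive $k,k'$.

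The main obstacle is establishing the containment $P^k \subset \Gamma_k$. Exponential Hausdorff convergence alone is insufficient, because a point close in Euclidean distance to the sub-curve $\mathcal{B}([t_0^k, t_n^k])$ might project, via the pipe's normal structure, to a neighboring parameter interval and hence into $\Gamma_{k\pm 1}$ rather than $\Gamma_k$. To remedy this, I would combine Hausdorff convergence with the angular convergence of Theorem~\ref{thm:curvature_conv}: since each edge of $P^k$ is nearly tangent to $\mathcal{B}$, and the endpoints $p_0^k=\mathcal{B}(t_0^k)$ and $p_n^k=\mathcal{B}(t_n^k)$ lie exactly on the boundary normal discs $D_r(t_0^k)$ and $D_r(t_n^k)$ of $\Gamma_k$, continuity of the foot-of-perpendicular projection onto $\mathcal{B}$ forces the entire $P^k$ to remain within $\Gamma_k$ once $i$ is sufficiently large. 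Taking the maximum of the subdivision thresholds coming from Theorem~\ref{thm:non-self-int-sub}, Lemma~\ref{lem:pi}, and this containment argument yields a single value of $i$ that simultaneously achieves all three non-intersection conditions, completing the proof.
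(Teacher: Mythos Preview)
Your overall strategy---show each $P^k$ simple, separate consecutive sub-polygons by the normal plane at the shared endpoint, and separate non-consecutive ones via the pipe sections $\Gamma_k$---matches the paper exactly. The difference is in how you obtain the containment $P^k\subset\Gamma_k$.

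You treat this containment as the hard step and propose a separate argument based on near-tangency of edges and continuity of the nearest-point projection. That sketch is vague as written: continuity of the projection only tells you the projected parameter varies continuously along $P^k$ from $t_0^k$ to $t_n^k$, not that it stays in $[t_0^k,t_n^k]$; you still need something to rule out the parameter overshooting an endpoint and returning. The paper closes this gap with a tool you already have in hand. Lemma~\ref{lem:pi} (equivalently, total curvature $<\pi/2$ via Lemma~\ref{lem:plpi}) says $P^k$ meets the normal plane at each endpoint only at that endpoint, so in particular $P^k$ meets the boundary normal discs $D_r(t_0^k)$ and $D_r(t_n^k)$ only there. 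Combined with the Hausdorff bound placing $P^k$ inside the pipe, connectedness forces $P^k\subset\Gamma_k$: the only way to exit $\Gamma_k$ while remaining in the pipe is to cross one of those two boundary discs. Thus your step~2 already contains what you need for step~3; the paper simply takes $\iota=\max\{\iota_1,\iota_2\}$ where $\iota_1$ makes every $T_\kappa(P^k)<\pi/2$ and $\iota_2$ puts the polygon inside the pipe, and both separation statements follow at once. No independent tangency/projection argument is required.
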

\begin{proof}
By Theorem~\ref{thm:curvature_conv}, we can take $\iota_1$ subdivisions so that $T_{\kappa}(P^k) < \pi/2$, for each sub-control polygon $P^k$. By Lemma~\ref{lem:non-int}, this choice of $\iota_1$ guarantees that each $P^k$ is simple. By the convergence in Hausdorff distance under subdivision \cite{Nairn-Peters-Lutterkort1999}, we can take $\iota_2$ subdivisions such that  the control polygon generated by $\iota_2$ subdivision fits inside the pipe surface $S_r(\mathcal{B})$. Choose $\iota=\max\{\iota_1,\iota_2\}$. By Lemma~\ref{lem:pi}, this choice of $\iota$ ensures that each $P^k$ fits inside the corresponding $\Gamma_k$. This plus the fact that $P^k$ is simple shows that the control polygon, $\bigcup_{k = 1}^{2^i} P^k$, is simple, which implies the homeomorphism. \hfill $\boxempty$
\end{proof}

\subsection{Ambient isotopy}
We derive the ambient isotopy following \cite[Proposition 3.1]{DenneSullivan2008}.

\begin{cor}\label{corol:amb}
Sufficient subdivisions will yield a simple control polygon that is ambient isotopic to $\mathcal{B}$.
\end{cor}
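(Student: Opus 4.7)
The plan is to invoke Proposition 3.1 of Denne-Sullivan, which upgrades a homeomorphism between two $C^1$-like curves to an ambient isotopy provided that (i) the two curves are close in Hausdorff (or sup) distance and (ii) at corresponding points their unit tangent vectors make small angles. So I need to produce, after enough subdivisions, a control polygon that simultaneously meets all three conditions and then quote their proposition.

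First I would fix $\iota_0$ from Theorem~\ref{thm:non-self-intersection} so that after any $i \geq \iota_0$ subdivisions the resulting control polygon $P_i = \bigcup_k P^k$ is simple and homeomorphic to $\mathcal{B}$. This gives condition (i) of Denne-Sullivan directly, and also produces a natural correspondence: map each vertex $\mathcal{P}(t_j)$ to the point $\mathcal{B}(t_j)$ of the same Bézier parameter, interpolating linearly in between on each edge. Next, I would use the exponential Hausdorff convergence of Lane-Riesenfeld / Nairn-Peters-Lutterkort (already cited in the proof of Theorem~\ref{thm:non-self-intersection}) to choose $\iota_1$ large enough that the distance between $P_i$ and $\mathcal{B}$ under this correspondence is smaller than whatever threshold Denne-Sullivan require (which depends on the reach, or equivalently the pipe radius $r$, of $\mathcal{B}$).

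For the tangent condition, I would compare, on each edge of $P_i$, the edge direction $\mathcal{P}'(t_j)/\|\mathcal{P}'(t_j)\|$ with the unit tangent $\mathcal{B}'(t)/\|\mathcal{B}'(t)\|$ for $t$ in the corresponding parameter interval. Morin-Goldman's result (Lemma 6 of their paper, as already used in Lemma~\ref{lem:derivative_diff}) gives that the discrete derivatives $\mathcal{P}'(t_j)$ converge uniformly to $\mathcal{B}'(t_j)$, while regularity and $C^1$-continuity of $\mathcal{B}$ give a uniform positive lower bound $\lambda$ on $\|\mathcal{B}'\|$ and a uniform modulus of continuity for the unit tangent of $\mathcal{B}$. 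Combining these, the angle between the edge direction at $t_j$ and $\mathcal{B}'(t)/\|\mathcal{B}'(t)\|$ for nearby $t$ can be driven below any prescribed $\varepsilon$ by taking $i \geq \iota_2$ large enough. Note that Theorem~\ref{thm:curvature_conv} is not strictly needed here, but it confirms that the tangent-angle variation along consecutive edges also vanishes, so the comparison is uniform across the edges of each sub-control polygon.

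Finally I would take $i \geq \max\{\iota_0, \iota_1, \iota_2\}$ and apply Proposition 3.1 of Denne-Sullivan to conclude that $P_i$ and $\mathcal{B}$ are ambient isotopic. The main obstacle I anticipate is step three: making precise the comparison between the piecewise-constant tangent field of the control polygon and the continuously varying unit tangent of $\mathcal{B}$ at \emph{corresponding} points, since Denne-Sullivan's hypothesis is phrased pointwise. The fix is to pair the midpoint of each edge of $P_i$ with $\mathcal{B}(\tfrac{t_{j-1}+t_j}{2})$, use Morin-Goldman on the endpoints and the small exterior-angle bound from Theorem~\ref{thm:curvature_conv} to extend the tangent comparison across the edge, and absorb the resulting error into $\varepsilon$ by choosing $i$ sufficiently large.
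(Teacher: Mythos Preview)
Your proposal is correct and follows essentially the same route as the paper: obtain the homeomorphism from Theorem~\ref{thm:non-self-intersection}, use the parametric correspondence $\mathcal{P}(t)\leftrightarrow\mathcal{B}(t)$, invoke Nairn--Peters--Lutterkort for the distance bound and Morin--Goldman for the tangent bound, and conclude via Denne--Sullivan's Proposition~3.1. The paper states the explicit thresholds $\|\mathcal{B}(t)-\mathcal{P}(t)\|<r/2$ and $\theta(t)<\pi/6$ and simply cites the convergence $\mathcal{P}'(t)\to\mathcal{B}'(t)$, whereas you work a bit harder on the tangent comparison at interior points of edges; this extra care is fine but not strictly needed, since the paper's $\mathcal{P}'(t)$ is by definition the PL interpolant of the discrete derivatives and its uniform convergence to $\mathcal{B}'(t)$ already controls the angle everywhere.
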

\begin{proof}
By Theorem~\ref{thm:non-self-intersection}, sufficiently many subdivisions will produce a homeomorphic $\mathcal{P}$. We can define a homeomorphism $h$ mapping $\mathcal{P}(t)$ to $\mathcal{B}(t)$ by 
$$h(p)=\mathcal{B}(\mathcal{P}^{-1}(p))\ \ \text{for}\ p \in \mathcal{P}.$$

Denne and Sullivan \cite[Proposition 3.1]{DenneSullivan2008} proved that provided the homeomorphism, $\mathcal{B}$ and $\mathcal{P}$ are ambient isotopic $||\mathcal{B}(t)-\mathcal{P}(t)||<\frac{r}{2}$ (where $r$ is the radius of a pipe surface) and $\max_{t\in[0,1]} \theta(t)<\frac{\pi}{6}$ (where $\theta(t)$ is the angle between $\mathcal{B}'(t)$ and $\mathcal{P}'(t)$). 

Because $\mathcal{P}(t)$ converges to $\mathcal{B}(t)$ \cite{Nairn-Peters-Lutterkort1999} and $\mathcal{P}'(t)$ converges to $\mathcal{B}'(t)$ \cite{Morin_Goldman2001}, the conclusion follows.  \hfill $\boxempty$
\end{proof}

\begin{remark}
The result \cite[Proposition 3.1]{DenneSullivan2008} contains an assumption that the limit curve is $C^{1,1}$, to ensure the existence of a positive thickness, which is equivalent to the existence of a non-self-intersecting pipe surface here. Note that our limit curve $\mathcal{B}$ is assumed to be a simple, compact composite B\'ezier curve. So the curve is $C^2$ except at finitely many points. It follows easily that $\mathcal{B}$ is actually $C^{1,1}$. 
\end{remark}


\section{Sufficient Subdivision Iterations}
\label{sec:nneeded}

In this section, we shall establish closed-form formulas to computer sufficient numbers of subdivisions for small exterior angles, homeomorphism and ambient isotopy respectively. 

From the previous sections we know that the homeomorphism is obtained by subdivision based on two criteria: (1) angular convergence; and (2) convergence in distance. So the speed of achieving these topological characteristics is determined by the angular convergence rate and the convergence rate in distance which are both exponential. Here, we further find closed-form formulas to compute sufficient numbers of subdivision iterations to achieve these properties. 

\begin{definition}
Let $P$ denote a control polygon of a B\'ezier curve, and let $P_x$ denote an ordered list of all of $x$-coordinates of  $P$ (with similar meaning given to $P_y$ for the $y$-coordinates and to $P_z$ for the $z$-coordinates).  Let
\[\parallel \triangle_2P_x \parallel_{\infty}=\max_{0<m<n}| P_{m-1,x}-2P_{m,x}+P_{m+1,x}| \]
be the maximum absolute second difference of the x-coordinates of control points,  (with similar meanings for the $y$ and $z$ coordinates) .  Let
\[ \Delta_2 P = ( \parallel \triangle_2P_x \parallel_{\infty}, \parallel \triangle_2P_y \parallel_{\infty}, \parallel \triangle_2P_z\parallel_{\infty}), \]
(i.e.) a vector with 3 values.  
\end{definition}

\begin{definition}
The distance\footnote{The distance here is as previously used \cite{Nairn-Peters-Lutterkort1999}. Note that the distance is not smaller than Fr\'echet distance. Our following results remain true if this distance is changed to Fr\'echet distance.} \textup{\cite{Nairn-Peters-Lutterkort1999}} between a B\'ezier curve $\mathcal{B}$ and the control polygon $\mathcal{P}$ generated by $i$ subdivisions is given by
$$\max_{t\in[0,1]}||\mathcal{P}(t)-\mathcal{B}(t)||. $$
\end{definition}  

\begin{lemma}
\label{lem:dist-conv}
The distance between the B\'ezier curve and its control polygon after $i$th-round subdivision is bounded by
\begin{equation}
\label{eq:Ninfty}
\frac{1}{2^{2i}}N_{\infty}(n)|| \Delta_2 P ||,
\end{equation}
where $$N_{\infty}(n)=\frac{\lfloor n/2 \rfloor \cdot \lceil n/2 \rceil}{2n}.$$ 
\end{lemma}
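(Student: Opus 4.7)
The plan is to combine two ingredients: the zero-subdivision distance bound of Nairn, Peters, and Lutterkort, and the contraction of the second-difference vector under a single de Casteljau step at $t=\tfrac{1}{2}$. The base bound states that for a degree-$n$ B\'ezier curve $\mathcal{B}$ with control polygon $P$, one has $\max_{t\in[0,1]}\|\mathcal{P}(t)-\mathcal{B}(t)\|\le N_\infty(n)\,\|\Delta_2 P\|$, which is exactly the claim of the lemma at $i=0$. The remaining task is to upgrade this to an $i$-th subdivision bound with the factor $1/2^{2i}=1/4^i$ appearing.

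Next I would show that, after one subdivision, each of the two new sub-control polygons $P^1$ and $P^2$ satisfies $\|\Delta_2 P^k\|\le \tfrac{1}{4}\|\Delta_2 P\|$ coordinate-wise. Using the de Casteljau recursion $p^{(r)}_j=\tfrac{1}{2}(p^{(r-1)}_j+p^{(r-1)}_{j+1})$ and the fact that the left (resp.\ right) sub-polygon vertices are $p^{(r)}_0$ (resp.\ $p^{(n-r)}_r$), a short expansion expresses each second difference of the new sub-polygon as a non-negative convex combination of second differences of $P$ divided by $4$. The $n=2$ and $n=3$ cases already exhibit this pattern explicitly (e.g.\ for $n=2$ one gets $\Delta^2 L_1 = \tfrac{1}{4}\Delta^2 p_1$ verbatim), and the general case is a bookkeeping exercise on the pyramid of intermediate points. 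Iterating over $i$ rounds then yields $\|\Delta_2 P^k\|\le 4^{-i}\|\Delta_2 P\|$ for every one of the $2^i$ sub-control polygons.

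To finish, I would apply the base bound to each sub-curve/sub-polygon pair: the restriction of $\mathcal{B}$ to $[(k-1)/2^i,\,k/2^i]$ is, up to affine reparameterization to $[0,1]$, a degree-$n$ B\'ezier curve with control polygon $P^k$, and that reparameterization is precisely what matches the uniform parameterization $\mathcal{P}(t)$ restricted to $[(k-1)/2^i,\,k/2^i]$. Taking the maximum over $k=1,\ldots,2^i$ then gives $\max_t\|\mathcal{P}(t)-\mathcal{B}(t)\|\le N_\infty(n)\,\max_k\|\Delta_2 P^k\|\le \tfrac{1}{2^{2i}}\,N_\infty(n)\,\|\Delta_2 P\|$, as claimed. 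The main obstacle is the $\tfrac{1}{4}$-per-subdivision contraction of $\|\Delta_2\cdot\|$ for arbitrary degree $n$; the cleanest route is probably to treat $\Delta_2$ as a linear operator on coordinate vectors, identify the non-negative stochastic weight matrix induced by one round of subdivision, and extract the contraction factor from a row-sum computation. Alternatively, since the contraction is essentially contained in \cite{Nairn-Peters-Lutterkort1999}, a very short citation-based argument suffices and is likely what the authors use here.
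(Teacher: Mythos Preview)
Your argument is correct, but it is organized differently from the paper's. The paper does not separately establish the $\tfrac{1}{4}$-per-step contraction of $\|\Delta_2(\cdot)\|$ and then apply the base bound on each sub-polygon. Instead it invokes \cite[Lemma~6.2]{Nairn-Peters-Lutterkort1999} directly---that scalar result already carries the full $1/2^{2i}$ factor for the $i$-th subdivision---applies it coordinate-wise to $P_x$, $P_y$, $P_z$, and then takes the Euclidean norm of the three scalar bounds to obtain $\tfrac{1}{2^{2i}}N_\infty(n)\|\Delta_2 P\|$. So the only work in the paper's proof is the passage from the scalar statement to $\mathbb{R}^3$ via the identity $\|(a,b,c)\|\le\|(A,B,C)\|$ when $|a|\le A$, $|b|\le B$, $|c|\le C$. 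Your final sentence correctly anticipates a citation-based shortcut, but you do not name this coordinate-wise step, which is precisely the content of the authors' proof. What your longer route buys is self-containment and an explicit mechanism (the $4^{-i}$ contraction of second differences), at the cost of reproving material already packaged in the cited lemma; the paper's route is a two-line reduction that leans entirely on the reference.
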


\begin{proof}
A published lemma \cite [Lemma 6.2] {Nairn-Peters-Lutterkort1999} proves a similar result restricted to scalar valued
polynomials. We consider coordinate-wise and apply this result to the $x, y$, and $z$ coordinates
respectively, so that the distance of the $x$-coordinates of the B\'ezier curve and its control
polygon after ith-round subdivision is bounded by 
$$\frac{1}{2^{2i}}N_{\infty}(n)\parallel \Delta_2P_x \parallel_{\infty},$$
with similar expressions for the $y$ and $z$ coordinates.  Taking the Euclidean norm of the indicated three $x, y$ and $z$ bounds yields the upper bound given by~(\ref{eq:Ninfty}), an upper
bound of the distance between the B\'ezier curve and its control polygon after the $i$th subdivision.  \hfill $\boxempty$ \\
\end{proof}

For convenience, denote the above bound in distance as:
\begin{align}\label{eq:B_dist} B_{dist}(i) := \frac{1}{2^{2i}}N_{\infty}(n) || \Delta_2P ||.\end{align}

\begin{lemma}\label{lem:deri-dist}
After $i$ subdivision iterations, the distance between $\mathcal{P}'$ and $\mathcal{B}'$ is bounded by $B'_{dist}(i)$, where
\begin{align}\label{eq:B'_dist} B'_{dist}(i) := \frac{1}{2^{2i}}N_{\infty}(n-1) || \Delta_2P' ||,\end{align} and $P'$ that consists of $n-1$ control points is the control polygon of $\mathcal{B}'$.
\end{lemma}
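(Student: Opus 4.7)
The plan is to reduce this lemma to a direct application of the preceding Lemma~\ref{lem:dist-conv}, using the hodograph viewpoint together with the fact (already used inside the proof of Lemma~\ref{lem:derivative_diff}) that discrete differentiation commutes with subdivision.

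First I would recall that the derivative $\mathcal{B}'$ of a degree-$n$ B\'ezier curve is itself a B\'ezier curve (the hodograph) of one lower degree, whose control polygon is exactly $P'$ as defined by the discrete derivative construction in Section~\ref{sec:def-no-thm}; this is \cite[Lemma 6]{Morin_Goldman2001}, already cited in the proof of Lemma~\ref{lem:derivative_diff}. So $\mathcal{B}'$ fits the hypothesis of Lemma~\ref{lem:dist-conv}, but with the role of $(\mathcal{B},P,n)$ played by $(\mathcal{B}',P',n-1)$.

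Next, I need to identify $\mathcal{P}'$ as the control polygon of $\mathcal{B}'$ obtained after $i$ subdivision iterations, not merely as a discrete derivative of $\mathcal{P}$. This is precisely the content of \cite[Lemma 4]{Morin_Goldman2001}: discrete differentiation commutes with the de Casteljau midpoint subdivision. Hence if we perform $i$ subdivisions on $\mathcal{B}$ to obtain $\mathcal{P}$, then $\mathcal{P}'$ is identical to what one would obtain by performing $i$ subdivisions on $\mathcal{B}'$ starting from its control polygon $P'$.

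Having made this identification, I would then simply invoke Lemma~\ref{lem:dist-conv} with the substitutions above. That lemma gives, for a degree-$m$ B\'ezier curve with initial control polygon $Q$, a distance bound of $\frac{1}{2^{2i}} N_\infty(m)\,\|\Delta_2 Q\|$ after $i$ subdivisions; taking $m = n-1$ and $Q = P'$ yields
\[
\max_{t\in[0,1]}\|\mathcal{P}'(t)-\mathcal{B}'(t)\| \le \frac{1}{2^{2i}} N_\infty(n-1)\,\|\Delta_2 P'\| = B'_{dist}(i),
\]
which is the desired bound. There is essentially no obstacle beyond bookkeeping; the only subtlety worth emphasising in writing is the clean separation between (i) the algebraic identity $\mathcal{P}' = (\text{subdivided }P')$ and (ii) the reuse of the scalar-valued second-difference estimate of \cite[Lemma 6.2]{Nairn-Peters-Lutterkort1999} applied coordinate-wise on the hodograph, since the degree and control polygon shift are the only changes from the previous lemma.
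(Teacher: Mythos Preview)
Your proposal is correct and follows essentially the same route as the paper's proof: identify $\mathcal{P}'$ as the control polygon of the hodograph $\mathcal{B}'$ after $i$ subdivisions (via \cite[Lemmas 4 and 6]{Morin_Goldman2001}) and then apply Lemma~\ref{lem:dist-conv} with $(\mathcal{B}',P',n-1)$ in place of $(\mathcal{B},P,n)$. The paper's version is terser---it cites only \cite[Lemma~6]{Morin_Goldman2001} and immediately invokes Lemma~\ref{lem:dist-conv}---so your explicit separation of the commutativity step (Lemma~4) from the hodograph step (Lemma~6) is, if anything, a cleaner write-up.
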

\begin{proof}
 A control polygon's derivative is again a control polygon of the B\'ezier curve's derivative \cite[Lemma 6]{Morin_Goldman2001}. So by Lemma \ref{lem:dist-conv}, we have 
\begin{align}\label{eq:1st-deri} \max_{t\in[0,1]}||\mathcal{P}'(t)-\mathcal{B}'(t)|| \leq B'_{dist}(i). \end{align} \hfill $\boxempty$
\end{proof}
\subsection{Subdivision iterations for small exterior angles}
Assume $\nu$ is a small measure of angle between $0$ and $\pi$. We shall find how many subdivisions will generate a control polygon such that the measure $\alpha$ of each exterior angle satisfies\begin{align}\label{eq:alth} \alpha < \nu.  \end{align}

Recall the proof of angular convergence (Theorem~\ref{thm:curvature_conv}). Consider two arbitrary consecutive derivatives $u_i  = \mathcal{P}'(t_j)$ and $v_i = \mathcal{P}'(t_{j-1})$ and the corresponding exterior angle $\alpha$. Recall that in Section~\ref{sec:ac} we had Inequalities~\ref{eq:cos-1} and~\ref{eq:ml2}:
\begin{align}\label{eq:cosb}1-\cos(\alpha)\leq \frac{2||v_i-u_i||}{||u_i||} \leq \frac{M}{||u_i|| 2^{i-1}}.\end{align}

Let $\sigma=\min\{||\mathcal{B}'(t)||: t \in [0,1]\}$. The regularity of $\mathcal{B}$ ensures that $\sigma > 0$ and the continuity of $\mathcal{B}'$ on the compact interval $[0,1]$ ensures that the minimum exists. Recall  $u_i=\mathcal{P}'(t_j)$ for some $t_j\in [0,1]$. So it follows from Inequality~\ref{eq:1st-deri} that
$$||\mathcal{B}'(t_j)|| - ||u_i|| \leq B'_{dist}(i).$$
Solving the inequality we get 
$$||u_i|| \geq ||\mathcal{B}'(t_j)||- B'_{dist}(i) \geq \sigma-B'_{dist}(i).$$
In order to have $u_i \neq 0$, it is sufficient to perform enough subdivisions such that $$||u_i|| \geq  \sigma -B'_{dist}(i) >0,$$ that is $B'_{dist}(i) <\sigma$. By the definition (Equation~\ref{eq:B'_dist}) of $B'_{dist}(i)$ we set, 
$$\frac{1}{2^{2i}}N_{\infty}(n-1)\parallel \triangle_2P' \parallel < \sigma.$$
Therefore for $B'_{dist}(i) <\sigma$, it suffices to have\footnote{Throughout this paper, we use $\log$ for $\log_2$.} \begin{align}\label{eq:N1} i > \frac{1}{2} \log(\frac{N_{\infty}(n-1)\parallel \triangle_2P' \parallel}{\sigma})=N_1. \end{align}

After the $i$ subdivision iterations, whenever $i > N_1$, then $B'_{dist}(i)<B'_{dist}(N_1)$, because $B'_{dist}(i)$ is a strictly decreasing function (Equation~\ref{eq:B'_dist}). So it follows from Inequality~\ref{eq:cosb} that whenever $i > N_1$,
$$
1-\cos(\alpha)\leq \frac{M}{2^{i-1}(\sigma-B'_{dist}(i))}.
$$

To obtain $\alpha < \nu$ (Inequality~\ref{eq:alth}), it suffices to have that  $1 - cos(\alpha) < 1 - cos(\nu)$. Now choose $i$ large enough so that
\begin{align}\label{eq:bdgc} 1-\cos(\alpha)\leq \frac{M}{2^{i-1}(\sigma-B'_{dist}(N_1))} < 1-\cos(\nu).\end{align}
The second inequality of Inequality~\ref{eq:bdgc} implies that
$$i > \log(\frac{2M}{(1-\cos(\nu))(\sigma-B'_{dist}(N_1))}).$$
To simplify this expression, let 
\begin{align}\label{eq:f} f(\nu)=\frac{2M}{(1-\cos(\nu))(\sigma-B'_{dist}(N_1))}. \end{align}
Then, we have $$i > \log(f(\nu)).$$

\begin{theorem}
\label{thm:ntheta}
Given any $\nu > 0$, there exists an integer $N(\nu)$ defined by 
\begin{align}\label{eq:N} N(\nu)=\lceil \max\{N_1,\log(f(\nu))\} \rceil \end{align}
where $N_1$, and $f(\nu)$ are given by Equations~\ref{eq:N1} and~\ref{eq:f} respectively, 
such that each exterior angle is less than $\nu$, whenever $i> N(\nu)$.  
\end{theorem}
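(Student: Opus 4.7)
The plan is to package the inequalities that have already been assembled in the paragraphs leading up to the statement. Fix an arbitrary exterior angle $\alpha$ formed at a vertex of a subdivided control polygon, and as in the proof of Theorem~\ref{thm:curvature_conv} write $u_i = \mathcal{P}'(t_j)$ and $v_i = \mathcal{P}'(t_{j-1})$ for the two consecutive discrete derivatives at that vertex. From the chain (\ref{eq:cosb}), already combining Inequality~(\ref{eq:cos-1}) with Lemma~\ref{lem:derivative_diff}, one has
\[ 1 - \cos(\alpha) \;\leq\; \frac{2\,\|u_i - v_i\|}{\|u_i\|} \;\leq\; \frac{M}{\|u_i\|\,2^{i-1}}. \]
So the first task is simply to replace the $i$-dependent denominator $\|u_i\|$ by a positive constant that does not depend on $i$, and then invert the resulting inequality to isolate $i$ as a function of $\nu$.

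To control $\|u_i\|$ from below, I would invoke Lemma~\ref{lem:deri-dist}: since $\mathcal{P}'(t_j)$ approximates $\mathcal{B}'(t_j)$ with error at most $B'_{dist}(i)$, and $\sigma = \min_{t \in [0,1]} \|\mathcal{B}'(t)\| > 0$ exists by regularity and compactness, we obtain $\|u_i\| \geq \sigma - B'_{dist}(i)$. To keep this lower bound strictly positive, I set $B'_{dist}(i) < \sigma$ and solve using the definition of $B'_{dist}$ in~(\ref{eq:B'_dist}); this yields precisely the threshold $N_1$ of Equation~(\ref{eq:N1}). Since $B'_{dist}$ is strictly decreasing in $i$, for every $i > N_1$ we have the uniform bound $\|u_i\| \geq \sigma - B'_{dist}(N_1) > 0$, which can be substituted back into the cosine inequality to give
\[ 1 - \cos(\alpha) \;\leq\; \frac{M}{2^{i-1}\,(\sigma - B'_{dist}(N_1))}. \]

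Finally, to force $\alpha < \nu$, I would use monotonicity of $1 - \cos$ on $[0,\pi]$ and require the right-hand side above to be strictly less than $1 - \cos(\nu)$. Rearranging in $i$, this is equivalent to $i > \log(f(\nu))$ with $f(\nu)$ as defined by~(\ref{eq:f}). Taking the ceiling of the maximum of the two thresholds $N_1$ and $\log(f(\nu))$ produces the quantity $N(\nu)$ in~(\ref{eq:N}); for every $i > N(\nu)$, both the positivity required for the lower bound on $\|u_i\|$ and the refined cosine bound hold simultaneously, so $\alpha < \nu$. Since the argument did not depend on which vertex of which sub-control polygon was chosen, the bound applies to every exterior angle uniformly.

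The only subtle point — and essentially the only place where care is needed — is the order of operations in the second paragraph: one must substitute the $i$-independent lower bound $\sigma - B'_{dist}(N_1)$ (valid for all $i > N_1$) into the cosine inequality \emph{before} solving for $i$, since otherwise the expression for $f(\nu)$ would be implicit in $i$ and a closed form would not be available. Once that substitution is in place, the rest is direct algebra, and the appearance of the ceiling in~(\ref{eq:N}) is merely to cast the real-valued threshold as the integer number of subdivision iterations.
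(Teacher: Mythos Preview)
Your proposal is correct and follows exactly the approach of the paper: the paper's own proof simply says ``It follows from the definitions of $N_1$ and $f(\nu)$ and the analysis in this section,'' and your write-up is a faithful, more explicit recapitulation of precisely that analysis (the cosine bound~(\ref{eq:cosb}), the lower bound $\|u_i\|\geq\sigma-B'_{dist}(i)$ via Lemma~\ref{lem:deri-dist}, the threshold $N_1$, the substitution of the $i$-independent constant $\sigma-B'_{dist}(N_1)$, and the final inversion yielding $\log(f(\nu))$). Your closing remark about why one must freeze the denominator at $B'_{dist}(N_1)$ before solving for $i$ is a nice clarification of the one point the paper leaves implicit.
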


\begin{proof}
It follows from the definitions of $N_1$ and $f(\nu)$ and the analysis in this section.  \hfill $\boxempty$
\end{proof}

It is worth to note that $N$ is a logarithm depending on several parameters such as $\sigma$, $N_{\infty}(n)$ and $\triangle_2P'$ as well as an upper bound variable $\nu$. 

\subsection{Subdivision iterations for homeomorphism}
For a regular B\'ezier curve $\mathcal{B}$ of degree $1$ or $2$, the control polygon is trivially\footnote{For degree 1, both the curve and the polygon are either a  point or a line segment. For degree 2, there are three points. The curve and the polygon are planar and open (otherwise the curve is not regular). } ambient isotopic to  $\mathcal{B}$. We consider $n \geq 3$.

Given any $\nu > 0$, Theorem~\ref{thm:ntheta} shows that there exists an integer $N(\nu)$, such that each exterior angle is less than $\nu$ after $N(\nu)$ subdivisions.  Furthermore, there is an explicit closed formula to compute $N(\nu)$.
\begin{theorem}\label{thm:sloc}
There exists a positive integer, $N(\frac{\pi}{n-1})$ for $n > 2$, where $N(\frac{\pi}{n-1})$ is defined by Equation~\ref{eq:N}, such that after $\lceil N(\frac{\pi}{n-1}) \rceil$ subdivisions, each sub-control polygon will be simple.
\end{theorem}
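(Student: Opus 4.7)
The plan is to combine Theorem~\ref{thm:ntheta} with Lemma~\ref{lem:non-int} in the most direct way possible. Each sub-control polygon $P^k$ is open and has $n$ vertices, hence exactly $n-1$ exterior angles $\alpha_1^k,\ldots,\alpha_{n-1}^k$. By Lemma~\ref{lem:non-int}, $P^k$ is simple as soon as its total curvature
\[
T_{\kappa}(P^k)=\sum_{j=1}^{n-1}\alpha_j^k
\]
is strictly less than $\pi$. A clean sufficient condition is that every individual exterior angle be strictly less than $\frac{\pi}{n-1}$, since then the sum of $n-1$ such angles is strictly less than $\pi$.

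First, I would invoke Theorem~\ref{thm:ntheta} with the specific choice $\nu=\frac{\pi}{n-1}$, which is well-defined and strictly positive because $n>2$. The theorem supplies an explicit integer $N\!\left(\frac{\pi}{n-1}\right)$, computed by Equation~\ref{eq:N}, with the guarantee that whenever $i>N\!\left(\frac{\pi}{n-1}\right)$ every exterior angle of every sub-control polygon generated by subdivision satisfies $\alpha_j^k<\frac{\pi}{n-1}$.

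Next, I would sum over the $n-1$ angles of a fixed sub-control polygon to conclude that $T_{\kappa}(P^k)<(n-1)\cdot\frac{\pi}{n-1}=\pi$, and then apply Lemma~\ref{lem:non-int} to deduce that each $P^k$ is simple. Since the bound from Theorem~\ref{thm:ntheta} holds uniformly over all sub-control polygons produced at level $i$, the conclusion applies simultaneously to every $P^k$ for $k=1,\ldots,2^i$.

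The ``hard part'' is essentially just conceptual bookkeeping: one must notice that the parameter $\nu$ in Theorem~\ref{thm:ntheta} governs a single exterior angle, so to bound the \emph{total} curvature of each sub-control polygon by $\pi$ one must scale $\nu$ by $\frac{1}{n-1}$. No new estimates are required; the dependence on $n$, on $\sigma$, on $\|\triangle_2 P'\|$, and on $N_{\infty}(n-1)$ is already packaged inside the closed-form expression for $N(\cdot)$, and substituting $\nu=\frac{\pi}{n-1}$ into Equation~\ref{eq:N} yields the desired explicit bound.
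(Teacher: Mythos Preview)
Your proposal is correct and follows essentially the same approach as the paper's own proof: apply Theorem~\ref{thm:ntheta} with $\nu=\frac{\pi}{n-1}$, sum the $n-1$ exterior angles of each sub-control polygon to bound its total curvature by $\pi$, and invoke Lemma~\ref{lem:non-int}. One minor slip: each sub-control polygon has $n+1$ vertices (hence $n$ edges) rather than $n$ vertices, but your count of $n-1$ exterior angles is correct, so the argument is unaffected.
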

\begin{proof} 
By Theorem~\ref{thm:ntheta}, after $N(\frac{\pi}{n-1})$ subdivisions,  each exterior angle is less than $\frac{\pi}{n-1}$. Since each sub-control polygon has a $n-1$ exterior angles, the total curvature of each sub-control polygon is less than $\pi$. Lemma~\ref{lem:non-int} implies that this is a sufficient condition for each sub-control polygon being simple.  
\hfill $\boxempty$\\
\end{proof}

While existence of sufficiently many iterations for the control polygon to fit inside the pipe $S_r(\mathcal{B})$ has been established, it remains of interest to bound the number of subdivisions that are sufficient for this containment. Define $N'(r)$ by 
\begin{align}\label{eq:N'}N'(r)=\frac{1}{2}\log(\frac{N_{\infty}(n)|| \Delta_2P ||}{r}), \end{align}
where $r$ is the radius of a non-self-intersecting pipe surface for $\mathcal{B}$.
By the definition of $B_{dist}(i)$ (Equation~\ref{eq:B_dist}) and Equation~\ref{eq:N'}, we have $B_{dist}(i) < r$ whenever $i>N'(r)$.

\begin{lemma}\label{lem:nofdist}
The control polygon generated by $i$ subdivisions, where $i > N'(r)$ and $N'(r)$ is given by Equation~\ref{eq:N'}, satisfies 
$$\max_{t\in[0,1]}||\mathcal{B}(t)-\mathcal{P}(t)|| < r,$$
and hence fits inside the pipe surface of radius $r$ for $\mathcal{B}$. 
\end{lemma}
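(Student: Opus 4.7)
The plan is to combine the distance bound from Lemma~\ref{lem:dist-conv} with the algebraic definition of $N'(r)$ in Equation~\ref{eq:N'}, and then invoke the tubular-neighborhood interpretation of a non-self-intersecting pipe surface.

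First I would recall that Lemma~\ref{lem:dist-conv} gives
\[
\max_{t \in [0,1]} \| \mathcal{B}(t) - \mathcal{P}(t) \| \;\leq\; B_{dist}(i) \;=\; \frac{1}{2^{2i}} N_{\infty}(n) \, \|\Delta_2 P\|,
\]
so the whole question reduces to forcing $B_{dist}(i) < r$. Solving $\frac{1}{2^{2i}} N_{\infty}(n) \|\Delta_2 P\| < r$ for $i$ yields $2^{2i} > \frac{N_{\infty}(n) \|\Delta_2 P\|}{r}$, i.e.\ $i > \frac{1}{2} \log\!\bigl(\frac{N_{\infty}(n) \|\Delta_2 P\|}{r}\bigr) = N'(r)$. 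Since $B_{dist}$ is strictly decreasing in $i$, the hypothesis $i > N'(r)$ immediately gives $B_{dist}(i) < B_{dist}(N'(r)) = r$, which yields the stated bound on $\max_t \|\mathcal{B}(t) - \mathcal{P}(t)\|$.

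For the second assertion, I would appeal to the fact that the pipe surface $S_r(\mathcal{B})$ was chosen to be non-self-intersecting (as set up just before Theorem~\ref{thm:non-self-intersection}), so the open region it bounds is a genuine tubular neighborhood of $\mathcal{B}$, namely $\{x \in \mathbb{R}^3 : d(x, \mathcal{B}) < r\}$. Since each point $\mathcal{P}(t)$ satisfies $d(\mathcal{P}(t), \mathcal{B}) \leq \|\mathcal{P}(t) - \mathcal{B}(t)\| < r$, the PL curve $\mathcal{P}$ is contained in this tubular neighborhood, hence fits inside the pipe surface.

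This proof is almost entirely bookkeeping — the genuine work is already done in Lemma~\ref{lem:dist-conv} and in the definition of $N'(r)$ — so I do not anticipate any obstacle beyond ensuring the inequality for $N'(r)$ is written with the correct direction (strict inequality, obtained because $B_{dist}$ is monotone decreasing in $i$). The one subtle point worth stating carefully is that ``fits inside the pipe surface'' must mean ``lies in the tubular neighborhood bounded by the pipe surface,'' which is legitimate precisely because $r$ was chosen so that $S_r(\mathcal{B})$ is non-self-intersecting.
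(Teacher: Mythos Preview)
Your proposal is correct and follows essentially the same approach as the paper: invoke Lemma~\ref{lem:dist-conv} to bound $\max_t\|\mathcal{B}(t)-\mathcal{P}(t)\|$ by $B_{dist}(i)$, then use the definition of $N'(r)$ (Equation~\ref{eq:N'}) to force $B_{dist}(i)<r$. The paper's proof is simply a terser version of what you wrote, and your added remarks on monotonicity of $B_{dist}$ and the tubular-neighborhood interpretation of the pipe surface are reasonable elaborations rather than a different route.
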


\begin{proof}
By Lemma~\ref{lem:dist-conv}, $\max_{t\in[0,1]}||\mathcal{B}(t)-\mathcal{P}(t)|| \leq B_{dist}(i)$. Then this lemma follows from the definition of $N'(r)$ given by Equation~\ref{eq:N'}.\hfill $\boxempty$
\end{proof}

While Theorem~\ref{thm:sloc} addresses each sub-control polygon, it is of interest to ensure that the union of all these sub-control polygons is also simple.  In Theorem~\ref{thm:simples}, that union is the `control polygon', as the result of multiple subdivisions.

\begin{theorem}\label{thm:simples}
Set $$\hat{N}=\max\{ N(\frac{\pi}{2(n-1)}),N'(r) \},$$ where $N(\nu)$ is defined by Equations~\ref{eq:N} and $N'(r)$ is given by Equation~\ref{eq:N'}. After $\lceil \hat{N} \rceil$ or more subdivisions, the control polygon will be homeomorphic. 
\end{theorem}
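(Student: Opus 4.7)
The plan is to mirror the proof of Theorem~\ref{thm:non-self-intersection}, but with the two existence claims there replaced by the explicit bounds $N(\pi/(2(n-1)))$ and $N'(r)$. The strategy is to show that after $\lceil \hat{N} \rceil$ subdivisions, two conditions hold simultaneously: (a) each sub-control polygon $P^k$ has total curvature strictly less than $\pi/2$, and (b) the entire control polygon $\bigcup_k P^k$ lies inside the non-self-intersecting pipe surface $S_r(\mathcal{B})$. The homeomorphism then follows by isolating each $P^k$ inside its pipe section $\Gamma_k$, so that the union of the $P^k$ inherits simplicity from the simplicity of each piece together with the pairwise (essential) disjointness of the pipe sections.

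First I would verify (a). Since each sub-control polygon has exactly $n-1$ exterior angles, Theorem~\ref{thm:ntheta} applied with $\nu = \pi/(2(n-1))$ guarantees that after $N(\pi/(2(n-1)))$ subdivisions every exterior angle is smaller than $\pi/(2(n-1))$, so
\begin{equation*}
T_{\kappa}(P^k) \;=\; \sum_{j=1}^{n-1} \alpha_j^k \;<\; (n-1)\cdot \frac{\pi}{2(n-1)} \;=\; \frac{\pi}{2}.
\end{equation*}
This bound is chosen stronger than the $<\pi$ bound of Theorem~\ref{thm:sloc} precisely so that Lemma~\ref{lem:plpi} becomes available for each $P^k$, not merely Lemma~\ref{lem:non-int}. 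Condition (b) is handled by Lemma~\ref{lem:nofdist}: once $i > N'(r)$, the Hausdorff distance between $\mathcal{P}$ and $\mathcal{B}$ is less than $r$, so every point of $\bigcup_k P^k$ lies in $S_r(\mathcal{B})$. Taking $i \geq \lceil\hat{N}\rceil$ secures (a) and (b) at once.

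Next I would use (a) and (b) to deduce simplicity of the global control polygon. Lemma~\ref{lem:non-int} applied to each $P^k$ (using $T_\kappa(P^k) < \pi/2 < \pi$) shows that each $P^k$ is itself simple. For any junction point $w$ where the subdivision splits $\mathcal{B}$, let $\Pi_w$ denote the plane normal to $\mathcal{B}$ at $w$; Lemma~\ref{lem:pi}, whose hypothesis is exactly $T_\kappa(P^k) < \pi/2$, ensures that the sub-control polygon ending at $w$ and the sub-control polygon beginning at $w$ each intersect $\Pi_w$ only at $w$. Combined with (b), this forces each $P^k$ to remain inside its own pipe section $\Gamma_k$ and to meet the bounding normal discs of $\Gamma_k$ only at the two endpoints of $P^k$. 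Because $S_r(\mathcal{B})$ is non-self-intersecting, distinct $\Gamma_k$ and $\Gamma_{k'}$ are disjoint except possibly at a shared bounding disc of consecutive indices, and there they meet only at the common junction. Therefore $\bigcup_k P^k$ is simple.

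Finally, simplicity of the PL image together with the fact that $\bigcup_k P^k$ and $\mathcal{B}$ are both compact arcs (or both compact simple loops, if $\mathcal{B}$ is closed) with matching endpoint structure delivers the desired homeomorphism between $\mathcal{P}$ and $\mathcal{B}$, exactly as in Theorem~\ref{thm:non-self-intersection}. The only step that requires genuine attention, as opposed to bookkeeping, is confirming that the choice $\nu = \pi/(2(n-1))$ is the right balance: it is strong enough to invoke Lemma~\ref{lem:plpi} for each sub-control polygon (which in turn powers Lemma~\ref{lem:pi}), yet still small enough to be attainable through the closed-form bound $N(\nu)$ of Theorem~\ref{thm:ntheta}. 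Once that is noted, the rest of the argument is assembly of previously established lemmas.
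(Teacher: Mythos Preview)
Your proposal is correct and follows essentially the same route as the paper: the paper's proof simply notes that $\hat{N}\ge N'(r)$ forces the control polygon inside the pipe and $\hat{N}\ge N(\frac{\pi}{2(n-1)})$ forces each sub-control polygon to have total curvature below $\frac{\pi}{2}$, then defers to the proof of Theorem~\ref{thm:non-self-intersection}. You have merely unpacked that deferral in full, spelling out the roles of Lemmas~\ref{lem:non-int}, \ref{lem:plpi}, and~\ref{lem:pi} that the paper leaves implicit.
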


\begin{proof}
The inequality $N \geq N'(r)$ implies that the control polygon generated after the $N$th subdivision lies inside the pipe. The inequality $N \geq N(\frac{\pi}{2(n-1)})$ ensures that the total curvature of its each sub-control polygon is less than $\frac{\pi}{2}$. These two conditions are sufficient conditions for the control polygon being simple (The proof of Theorem~\ref{thm:non-self-intersection}). \hfill $\boxempty$
\end{proof}

\subsection{Subdivision iterations for ambient isotopy}
Recall, by Corollary~\ref{corol:amb}, that a homeomorphic $\mathcal{P}$ will further be ambient isotopic if \mbox{$||\mathcal{B}(t)-\mathcal{P}(t)||<\frac{r}{2}$} and $\max_{t\in[0,1]} \theta(t)<\frac{\pi}{6}$ (where $\theta(t)$ is the angle between $\mathcal{B}'(t)$ and $\mathcal{P}'(t)$). We may produce $N'(\frac{r}{2})$ subdivisions to satisfy the first condition (Lemma~\ref{lem:nofdist}). To guarantee the second condition, we consider:

$$1- \cos (\theta(t))  = 1- \frac{\mathcal{B'}(t) \cdot \mathcal{P'}(t)}{||\mathcal{B'}(t)|| \cdot ||\mathcal{P'}(t)||}$$
$$=\frac{||\mathcal{B'}(t)|| \cdot ||\mathcal{P'}(t)||- \mathcal{P'}(t) \cdot \mathcal{P'}(t)+\mathcal{P'}(t) \cdot \mathcal{P'}(t)-\mathcal{B'}(t) \cdot \mathcal{P'}(t)}{||\mathcal{B'}(t)|| \cdot ||\mathcal{P'}(t)||} $$
$$\leq \frac{||\mathcal{B'}(t)||-||\mathcal{P'}(t)||}{||\mathcal{B'}(t)||} + \frac{||\mathcal{B'}(t) - \mathcal{P'}(t)||}{||\mathcal{B'}(t)||} \leq \frac{2||\mathcal{B'}(t)-\mathcal{P'}(t)||}{\sigma},$$
where $\sigma=\min\{||\mathcal{B}'(t)||: t \in [0,1]\}$ (Recall $\sigma>0$.) From Inequality~\ref{eq:1st-deri}
$$\max_{t\in[0,1]} ||\mathcal{B}'(t) -\mathcal{P}'(t)|| \leq B'_{dist}(i),$$ we have 
$$1- \cos (\theta(t))  \leq \frac{2B'_{dist}(i)}{\sigma}.$$
To have $\theta(t)<\frac{\pi}{6}$, it suffices to set
$$\frac{2B'_{dist}(i)}{\sigma} < 1-\cos(\frac{\pi}{6})=1-\frac{\sqrt{3}}{2}.$$
By Equality~\ref{eq:B'_dist}, $$B'_{dist}(i) := \frac{1}{2^{2i}}N_{\infty}(n-1) || \Delta_2P' ||,$$ 
we get
\begin{align}\label{eq:N2} i \geq \frac{1}{2} \log (\frac{2 N_{\infty}(n-1) ||\Delta_2 P'||}{(1-\frac{\sqrt{3}}{2}) \sigma}) =N_2. \end{align}
So $N_2$ subdivision iterations will guarantee the second condition. 

\begin{theorem}
Set $$N^{\ast}=\max\{N(\frac{\pi}{2(n-1)}) , N'(\frac{r}{2}), N_2\},$$
where $N, N', N_2$ are given by Equations~\ref{eq:N}, \ref{eq:N'} and~\ref{eq:N2} respectively. 
After $\lceil N^{\ast} \rceil$ or more subdivisions,  the control polygon $\mathcal{P}$ will be ambient isotopic to the B\'ezier curve $\mathcal{B}$.
\end{theorem}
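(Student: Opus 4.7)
The plan is to verify that $\lceil N^{\ast} \rceil$ subdivisions simultaneously establish all three hypotheses of the Denne--Sullivan criterion invoked in Corollary~\ref{corol:amb}: (i) a homeomorphism between $\mathcal{P}$ and $\mathcal{B}$, (ii) the proximity bound $\max_{t\in[0,1]}\|\mathcal{B}(t)-\mathcal{P}(t)\| < r/2$, and (iii) the angular bound $\max_{t\in[0,1]}\theta(t) < \pi/6$. Each component in the maximum defining $N^{\ast}$ is tailored to one of these conditions, so the proof is essentially an exercise in bookkeeping, provided one observes a small monotonicity that allows $N'(r/2)$ to absorb the weaker requirement $N'(r)$.

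First, I would show that $i \geq N^{\ast}$ forces homeomorphism. Clearly $N^{\ast} \geq N(\pi/(2(n-1)))$. From the logarithmic formula in Equation~\ref{eq:N'}, $N'$ is monotone decreasing in its argument, hence $N'(r/2) \geq N'(r)$, and therefore $N^{\ast} \geq \hat{N} = \max\{N(\pi/(2(n-1))), N'(r)\}$. Theorem~\ref{thm:simples} then guarantees that $\mathcal{P}$ is simple and homeomorphic to $\mathcal{B}$; I would define the homeomorphism $h(p) = \mathcal{B}(\mathcal{P}^{-1}(p))$ precisely as in the proof of Corollary~\ref{corol:amb}.

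Next, the inequality $N^{\ast} \geq N'(r/2)$ combined with Lemma~\ref{lem:nofdist} and the definition of $B_{dist}(i)$ yields $\max_{t\in[0,1]}\|\mathcal{B}(t)-\mathcal{P}(t)\| < r/2$, which is condition (ii). Finally, the inequality $N^{\ast} \geq N_2$ combined with the chain of estimates leading to Equation~\ref{eq:N2}, namely $1 - \cos\theta(t) \leq 2 B'_{dist}(i)/\sigma$, forces $1-\cos(\max_t\theta(t)) < 1 - \cos(\pi/6)$ and hence $\max_t \theta(t) < \pi/6$, which is condition (iii). Applying Corollary~\ref{corol:amb} with these three conditions satisfied then produces the desired ambient isotopy between $\mathcal{P}$ and $\mathcal{B}$.

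There is no genuine obstacle in this argument, since the technical work has already been carried out in Theorem~\ref{thm:curvature_conv}, Theorem~\ref{thm:simples}, Lemma~\ref{lem:nofdist}, and the derivation of $N_2$. The only point requiring explicit mention is the monotonicity of $N'$ in $r$, which is what lets the tighter distance requirement for isotopy simultaneously cover the looser distance requirement needed for homeomorphism, so that only three terms (rather than four) are needed inside the maximum defining $N^{\ast}$.
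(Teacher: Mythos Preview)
Your proof is correct and follows the same approach as the paper: verify the hypotheses of Theorem~\ref{thm:simples} for homeomorphism, then check the two additional Denne--Sullivan bounds via $N'(r/2)$ and $N_2$. You are in fact slightly more careful than the paper, which invokes Theorem~\ref{thm:simples} directly from $N(\tfrac{\pi}{2(n-1)})$ and $N'(\tfrac{r}{2})$ without explicitly recording the monotonicity $N'(r/2) \geq N'(r)$ needed to match that theorem's hypothesis.
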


\begin{proof}
The values $N(\frac{\pi}{2(n-1)})$ and $N'(\frac{r}{2}$) are used to obtain a homeomorphism, by Theorem
~\ref{thm:simples}. And then $N_2$ is used to further obtain an ambient isotopy. \hfill $\boxempty$
\end{proof}

\begin{remark}
Note that $N(\frac{\pi}{2(n-1)})= \max\{N_1, \log{f(\frac{\pi}{2(n-1)})}\}$. Comparing $N_1$ (Equation~\ref{eq:N1}) and $N_2$, we find that $N_2 < N_1+2$. By Equation~\ref{eq:N'} we also have $N'(\frac{r}{2})< N'(r)+1$. So $N^{\ast}<\hat{N} +2$, where $\hat{N}=\max\{N(\frac{\pi}{2(n-1)}) , N'(r) \}$ is a sufficient number of subdivisions to guarantee homeomorphism. So after a homeomorphism based on Theorem~\ref{thm:simples} is attained, no more than $2$ additional subdivision iterations will be used to produce the ambient isotopy\footnote{The dissertation work \cite{JiLi} of the first author adopted an alternative, more explicit way to construct the ambient isotopy, with iteration bound of $\max\{N(\frac{\pi}{2n}) , N'(r) \}$. It was shown \mbox{\cite[Remark 4.2.7]{JiLi}} that $N(\frac{\pi}{2n})< N(\frac{\pi}{2(n-1)})+1$, so that no more than $1$ additional subdivision iteration would be used to produce the ambient isotopy after a homeomorphism is attained from Theorem~\ref{thm:simples}. However, the method here has advantages because of its direct use of subdivision versus specialized techniques. }. 
\end{remark}

\section{Conclusion}
We first proved the {\em exterior angles} of control polygons under subdivision converge to $0$ exponentially. We then showed that sufficiently many subdivisions produce a control polygon homeomorphic to the B\'ezier curve and further derived the ambient isotopy by relying upon a previous isotopy result by Denne and Sullivan. We established closed-form formulas to compute \textit{a priori} sufficient number of subdivisions to achieve these topological characteristics. These results are being applied in computer graphics, computer animation and scientific visualization, especially in visualizing molecular simulations. 

\bibliographystyle{plain}
\bibliography{ji-tjp-biblio}

\end{document}